\newenvironment{proof*}{\noindent\emph{Proof}}{$\square$\smallskip}
\newtheorem{theorem}{Theorem}[section]
\newtheorem{Definition}[theorem]{Definition}
\newtheorem{lemma}[theorem]{Lemma}
\newtheorem{Example}[theorem]{Example}
\newtheorem{Remark}[theorem]{Remark}
\newtheorem{proposition}[theorem]{Proposition}
\newtheorem{Exercise}[theorem]{Exercise}
\newtheorem{Exercises}[theorem]{Exercises}
\newtheorem{Notation}[theorem]{Notation}
\newtheorem{Convention}[theorem]{Convention}
\newenvironment{definition}{\begin{Definition}\normalfont}{\end{Definition}}
\newenvironment{example}{\begin{Example}\normalfont}{\end{Example}}
\newenvironment{remark}{\begin{Remark}\normalfont}{\end{Remark}}
\newcommand{\lkb}[2]{\ensuremath{\mathrm{lk}_{\downarrow}^{QV}(x^{#1}a^{#2})}} 
\title[QV, QT, QF Finiteness Properties]{Quasi-Automorphism Groups of Type $F_\infty$}
\author[S. Audino]{Samuel Audino}
\email{saudino480@gmail.com} 
\author[D. R. Aydel]{Delaney R. Aydel}
\address{Department of Mathematics\\
Miami University\\
Oxford, OH 45056 U.S.A}
\email{aydeld@miamioh.edu}
\author[D. S. Farley]{Daniel S. Farley}
\address{Department of Mathematics\\
Miami University\\
Oxford, OH 45056 U.S.A}
\email{farleyds@miamioh.edu}
\begin{document}

\begin{abstract}
The groups $QF$, $QT$, $\bar{Q}T$, $\bar{Q}V$, and $QV$ are groups of quasi-automorphisms of the infinite binary tree. Their names indicate a similarity with Thompson's well-known groups $F$, $T$, and $V$. 

We will use the theory of diagram groups over semigroup presentations to prove that all of the above groups (and several generalizations) have type $F_{\infty}$. Our proof uses certain types of hybrid diagrams, which have properties in common with both planar diagrams and braided diagrams. The diagram groups defined by hybrid diagrams also act properly and isometrically on CAT($0$) cubical complexes.
\end{abstract}

\subjclass[2010]{20F65, 57M07}

\keywords{quasi-automorphism group, Thompson's groups, Houghton groups, finiteness properties of groups, CAT(0) cubical complexes}

\maketitle

\section{Introduction}

Let $\Gamma$ be a graph. A \emph{quasi-automorphism} of $\Gamma$ is a bijection of the vertices that preserves adjacency, with at most finitely many exceptions. Following the notation of \cite{NJG}, we will let $QV$ denote the group of quasi-automorphisms $h$ of the infinite binary tree that also take the left and right children of a given vertex $v$ to the left and right children of $h(v)$, again with at most finitely many exceptions. The notation ``$QV$" indicates that $QV$ is a collection of quasi-automorphisms that bears a family resemblance to Thompson's group $V$. (A standard reference for Thompson's groups is \cite{CFP}. We will assume a basic familiarity with that source or its equivalent throughout this article.)

Groups of quasi-automorphisms have been the subject of several recent studies. Lehnert conjectured in his thesis that the group $QV$ is a universal group with context-free coword problem; i.e., a universal $coCF$ group. Bleak, Matucci, and Neunh\"{o}ffer \cite{Bleak} have produced an embedding of $QV$ into Thompson's group $V$, and thus proved that Lehnert's conjecture is equivalent to the conjecture that $V$ is itself a universal $coCF$ group. More recently, Nucinkis and St. John-Green \cite{NJG} have studied the finiteness properties of $QV$ and related groups. They introduced additional groups $QF$, $QT$, $\bar{Q}T$, and $\bar{Q}V$. The groups $QF$ and $QT$ are natural subgroups of $QV$ that preserve (respectively) the linear and cyclic orderings of the ends of the infinite binary tree (and therefore bear a family resemblance to Thompson's groups $F$ and $T$, respectively). The groups $\bar{Q}T$ and $\bar{Q}V$ are analogous groups that act as quasi-automorphisms on the union of an infinite binary tree with an isolated point. Nucinkis and St. John-Green show that the groups 
$QF$, $\bar{Q}T$, and $\bar{Q}V$ have type $F_{\infty}$, and also compute explicit finite presentations for these groups. Whether $QT$ and $QV$ are finitely presented (and thus, more particularly, of type $F_{\infty}$) is left as an open problem in \cite{NJG}.  

The third author showed (in \cite{FH2}) that $QV$ is a braided diagram group over a semigroup presentation. This description suggests an approach to proving the $F_{\infty}$ property for $QV$. Since \cite{Far(fp)} shows that a class of braided diagram groups (including Thompson's group $V$) have type $F_{\infty}$, and in fact other classes of diagram groups were shown to have type $F_{\infty}$ in \cite{Far(th)} and \cite{Far(fp)}, it is at least plausible that some approach inspired by the theory of diagram groups could establish the $F_{\infty}$ property for $QV$ and $QT$. (We note that the original proofs that Thompson's groups $F$, $T$, and $V$ have type $F_{\infty}$ were given by Brown \cite{Brown} and by Brown and Geoghegan \cite{BG} in the 1980s.)

Nucinkis and St. John-Green show, however, that the hypotheses of the main theorem in \cite{Far(fp)} are satisfied by neither $QT$ nor $QV$. In fact, as also noted in \cite{NJG}, even the much more general main theorem of \cite{Thumann} does not apply to either of $QT$ and $QV$.

The goal of the present article is to extend the diagram-group methods of \cite{Far(th)} and \cite{Far(fp)} to the groups $QF$, $QT$, $QV$, $\bar{Q}T$, and $\bar{Q}V$. We will show that all of these groups can be described using the theory of diagram groups over semigroup presentations. Indeed, all of these groups are diagram groups over the same semigroup presentation, namely
$\mathcal{P} = \langle x, a \mid x = xax \rangle$, although the specific types of diagram vary from group to group. Three types of diagram groups have been considered in the literature: planar, annular, and braided diagram groups. All were introduced by Guba and Sapir in \cite{GS}, which devotes by far the greatest attention to planar diagram groups (which are usually simply called diagram groups). The papers \cite{Far(pg)} and \cite{Far(fp)} consider the annular and braided diagram groups in more detail. Here we will introduce hybrid diagram groups that combine properties of multiple diagram group types. For instance, the group $QF$ is a special type of diagram group over $\mathcal{P}$, in which the diagrams exhibit both planar and braided behavior at the same time. We will use such hybrid diagrams to prove that the groups $QF$, $QT$, $QV$, $\bar{Q}T$, and $\bar{Q}V$ all act properly by isometries on CAT($0$) cubical complexes, and that all have type $F_{\infty}$. In fact, our methods extend with equal ease to the case of an arbitrary finite number of binary trees and isolated vertices (see Section \ref{section:generalization}), and the  case of $n$-ary trees (for fixed $n \geq 2$) is different only in the details. It even seems likely that our argument generalizes to other, non-regular, trees, although this is more speculative, and we attempt no general statement about such cases here.

We note that it is probably possible to extend the main theorem of \cite{Thumann} to prove the $F_{\infty}$ property in the cases under consideration here. This is only a guess, however, since the authors claim little familiarity with the methods of \cite{Thumann}.

We will now offer an outline of the argument. In Section \ref{section:bdgactions}, we will give a rapid introduction to the basic theory of diagram groups (including all three types: planar, annular, and braided), and describe the natural cubical complexes on which such groups act, including a description of the links of vertices. In Section \ref{section:QFQTQV}, we will give careful definitions of the groups $QF$, $QT$, and $QV$, and describe how to represent elements of each group as ``hybrid" diagrams. Section \ref{section:QFQTQV} also includes a description of natural complexes on which $QF$ and $QT$ act; these arise as convex (and thus CAT($0$) by \cite{CW}) subcomplexes of $QV$.  (We will in fact confine our attention to $QF$, $QT$, and $QV$ alone, without sketching a general theory of ``hybrid" diagrams. Nevertheless, we hope that the ideas indicated in Section \ref{section:QFQTQV} may be of some independent interest.) Section \ref{section:Finfinity} shows that $QF$, $QT$, and $QV$ have type $F_{\infty}$. Our argument follows the long-established method given by Ken Brown in \cite{Brown}. 
Section \ref{section:generalization} sketches some possible further developments, including sketches of the proofs that $\bar{Q}T$ and $\bar{Q}V$ have type $F_{\infty}$ (as already proved by \cite{NJG}), and the other generalizations briefly described above.

\section{Braided diagram groups and actions on associated complexes} \label{section:bdgactions}

\subsection{Definition of braided diagram groups}

To define braided diagram groups, we must first define braided diagrams over semigroup presentations. 

\begin{definition}  \label{definition:diagram} (Braided diagrams over a semigroup presentation) 
Let $\mathcal{P}$ be a \emph{semigroup presentation}; thus $\mathcal{P} = \langle \Sigma \mid
\mathcal{R} \rangle$, where $\Sigma$ is a set (to be regarded as an alphabet) and 
$\mathcal{R} \subseteq \Sigma^{+} \times \Sigma^{+}$, where $\Sigma^{+}$ is the set of all non-empty positive words in the symbols $\Sigma$. We view the elements of $\mathcal{R}$ as equalities 
between elements of $\Sigma^{+}$ (i.e., as \emph{relations}). For technical reasons, we impose the additional restriction that $(u,u) \notin \mathcal{R}$, for any $u \in \Sigma^{+}$.

A \emph{braided diagram $\Delta$ over $\mathcal{P}$} is a labelled ordered topological space formed by making identifications among three types of components: wires, transistors, and the frame.
\begin{itemize}
\item A \emph{wire} is a homeomorphic copy of $[0,1]$. The ``$0$" end is the bottom of the wire, and the ``$1$" end is the top.  
\item A \emph{transistor} is a homeomorphic copy of $[0,1]^{2}$. Each transistor has well-defined top, bottom, left, and right sides (in the obvious senses: the top is $[0,1] \times \{ 1 \}$, the bottom
is $[0,1] \times \{ 0 \}$, etc.). These sides are part of the transistor's definition. The top and bottom sides have equally obvious left-to-right orderings. (We make no use of any ordering of the sides.)
\item The \emph{frame} is a homeomorphic copy of $\partial [0,1]^{2}$. It has well-defined top, bottom, left, and right sides, just as a transistor does. The top and bottom sides have obvious left-to-right orderings.
\end{itemize}
To form a braided diagram $\Delta$ over $\mathcal{P}$, we begin with a finite non-empty collection
$W(\Delta)$ of wires, a labelling function $\ell: W(\Delta) \rightarrow \Sigma$, a finite (possibly empty) collection $T(\Delta)$ of transistors, and a frame. Each endpoint of each wire is then attached either to a transistor or to the frame. The bottom of a wire is attached either to the top of a transistor or to the bottom of the frame; the top of a wire is attached either to the bottom of a transistor or to the top of the frame. Moreover, the images of any two wires in the quotient must be disjoint.

The resulting labelled oriented quotient space is called a \emph{braided diagram over $\mathcal{P}$} if the following two conditions are also satisfied:

\begin{enumerate}
\item Note that each transistor $T$ inherits a top and bottom label from the labels of the wires it touches. (The points at which the wires meet transistors are called \emph{contacts}.) These labels are words in $\Sigma^{+}$, obtained by reading the labels of connecting wires from left to right.

Let $\ell_{top}(T)$ and $\ell_{bot}(T)$ denote the top and bottom labels, respectively. We require that $(\ell_{top}(T), \ell_{bot}(T)) \in \mathcal{R}$ or that $(\ell_{bot}(T), \ell_{top}(T)) \in \mathcal{R}$, for each transistor $T$.

\item For transistors $T_{1}$, $T_{2}$ of $\Delta$, write $T_{1} \preccurlyeq T_{2}$ if there is a wire whose bottom contact is a point on the top of $T_{1}$ and whose top contact is a point on the bottom of $T_{2}$. Let $<$ be the transitive closure of $\preccurlyeq$. We require that $<$ be a strict partial order on the transistors of $\Delta$.  

(Equivalently: suppose that the braided diagram $\Delta$ is drawn in the plane, such that each transistor is enclosed by the frame, and the sides of the transistors and frame are parallel to the coordinate axes. We require that it be possible to arrange the transistors and the frame in this fashion in such a way that each wire can be embedded monotonically; i.e., so that the $y$-coordinate in the embedded image increases as we move from the bottom of the wire to the top.)
\end{enumerate}
\end{definition}

\begin{definition} (Planar and annular diagrams) Let $\Delta$ be a braided diagram over the semigroup presentation $\mathcal{P}$. If $\Delta$ admits an embedding $h: \Delta \rightarrow \mathbb{R}^{2}$ into the plane that preserves the left-right and top-bottom orientations on the transistors and frame, then we say that $\Delta$ is \emph{planar}.

We say that $\Delta$ is \emph{annular} if it can be similarly embedded in an annulus. Or, more precisely, suppose that we replace the frame $\partial( [0,1]^{2})$ with a pair of disjoint circles, each of which is given the standard counterclockwise orientation, in place of the usual left-right orientations on the top and bottom of $\partial( [0,1])^{2}$. We further give both circles basepoints, which are to be disjoint from all contacts. Transistors and wires are defined as before, and their attaching maps are subject to the same restrictions as before. We say that the resulting diagram is \emph{annular} if the result may be embedded in the plane, again preserving the left-right orientations of the transistors. We think of the ``top" circle as the inner ring of the annulus and the ``bottom" circle as the outer ring.

[Here it may be helpful to view the transistors as having the counterclockwise orientation on their ``top'' and ``bottom'' faces, where the ``top" faces the interior boundary circle of the annulus and the ``bottom" faces the external boundary of the annulus.]
\end{definition}

\begin{definition} (Equivalence of braided diagrams) Two braided diagrams $\Delta_{1}$ and $\Delta_{2}$ are \emph{equivalent} if there is a homeomorphism $h$ between them, such that $h$ preserves the labelings of wires and all orientations (left-right and top-bottom) on all transistors and the frame.
\end{definition}

\begin{definition} \label{definition:uv} ($(u,v)$-diagrams) Let $\Delta$ be a braided semigroup diagram; let $u, v \in \Sigma^{+}$. We can define the \emph{top} and \emph{bottom labels} of $\Delta$ by reading the labels of the wires that connect to the frame, from left to right, just as we defined the top and bottom labels of an individual transistor above. We say that $\Delta$ is a \emph{braided $(u,v)$-diagram}
if the top label of $\Delta$ is $u$ and the bottom label is $v$. 

In some cases, it is not important to specify the bottom label. We say that $\Delta$ is 
a \emph{braided $(u, \ast)$-diagram} if the top label of $\Delta$ is $u$, and the bottom label is arbitrary.
\end{definition}

\begin{definition} \label{definition:concatenation}
(Concatenation) If $\Delta'$ is a braided $(u,v)$-diagram and $\Delta''$ is a braided $(v,w)$-diagram, then the \emph{concatenation} $\Delta' \circ \Delta''$ is defined by stacking the diagrams,
$\Delta'$ on top of $\Delta''$.
\end{definition}

\begin{remark}
We note (for the sake of clarity) that the basepoints on the inner and outer circles of an annular diagram $\Delta$ are needed in Definitions \ref{definition:uv} and \ref{definition:concatenation}. Here, the ``top" label of $\Delta$ is to be read counterclockwise from the top (inner) basepoint, while the bottom label of $\Delta$ is similarly read counterclockwise from the outer circle's basepoint. 

If $\Delta_{1}$ and $\Delta_{2}$ are annular $(u,v)$- and $(v,w)$-diagrams (respectively, for $u$, $v$, $w \in \Sigma^{+}$), then the concatenation $\Delta_{1} \circ \Delta_{2}$ is the result of identifying the outer circle of $\Delta_{1}$ with the inner circle of $\Delta_{2}$ at the chosen basepoints (while also, of course, matching the other contacts in counterclockwise order).
\end{remark}

\begin{figure}[h]
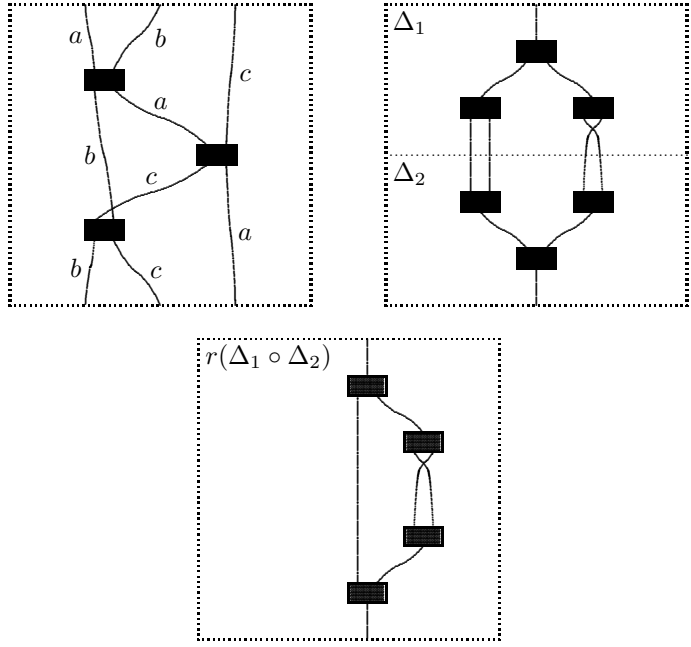
  
\vbox{\hspace{1cm}\beginpicture
\setcoordinatesystem units <.125 cm,.125cm> point at 0 0
\setplotarea x from -2 to 2, y from -2 to 2
\linethickness=.9pt

\setdots <2 pt >
\putrectangle corners at -24 16 and -56 -16

\putrectangle corners at 16 16 and -16 -16
\plot 16 0  -16 0 /

\put{$\Delta_{1}$} at -13.5 14
\put{$\Delta_{2}$} at -13.5 -2

\setsolid
\setshadegrid span <.1pt>
\shaderectangleson

\putrectangle corners at -48 9 and -44 7 
\putrectangle corners at -48 -7 and -44 -9 
\putrectangle corners at -36 1 and -32 -1 

\putrectangle corners at -2 12 and 2 10  
\putrectangle corners at -8 6 and -4 4 
\putrectangle corners at 8 6 and 4 4 

\putrectangle corners at -2 -12 and 2 -10  
\putrectangle corners at -8 -6 and -4 -4 
\putrectangle corners at 8 -6 and 4 -4 

\plot 0 16   0  12 / 
\plot -7 4   -7 -4 / 
\plot -5 4   -5 -4 / 
\plot 0 -16   0 -12 / 

\setquadratic
\plot -48 16  -47.75 13.8125  -47.5 12.5 / 
\plot -47.5 12.5   -47.25 11.1875 -47 9 / 
\put {$a$} at -49 12.5 
\plot -40 16  -41.25 13.8125 -42.5 12.5 / 
\plot -42.5 12.5   -43.75 11.1875 -45 9 / 
\put {$b$} at -40 12.5
\plot -32 16  -32.25 11.3125  -32.5 8.5 / 
\plot -32.5 8.5  -32.75 5.6875 -33 1 / 
\put {$c$} at -31 8.5 
\plot -45 7   -42.5 5.125  -40 4 / 
\plot -40 4   -37.5 2.875  -35 1 / 
\put {$a$} at -40 5.5
\plot -47 7  -46.5 2.625   -46 0 / 
\plot -46 0  -45.5 -2.625  -45 -7 / 
\put {$b$} at -47.5 0 
\plot -35 -1   -38 -2.875  -41 -4 / 
\plot -41 -4   -44 -5.125   -47 -7 / 
\put {$c$} at -41 -2.5
\plot -33 -1   -32.75 -5.6875   -32.5 -8.5 / 
\plot -32.5 -8.5   -32.25 -11.3125   -32 -16 / 
\put {$a$} at -31 -8.5
\plot -47 -9   -47.25 -11.1875   -47.5 -12 / 
\plot -47.5 -12   -47.75 -13.8125   -48 -16 / 
\put {$b$} at -49 -12
\plot -45 -9   -43.75 -11.1875   -42.5 -12.5 /
\plot -42.5 -12.5  -41.25 -13.8125   -40 -16 /  
\put {$c$} at -40.5 -12.5

\plot -1 10     -2.25 8.75    -3.5 8 / 
\plot -3.5 8    -4.75 7.25    -6 6 / 

\plot 1 10     2.25 8.75    3.5 8 / 
\plot 3.5 8    4.75 7.25    6 6 / 

\plot 5 4    5.375 3.375    5.75 3 / 
\plot 5.75 3    6.125 2.625    6.5 2 /
\plot 6.5 2    6.75 .125   7 -4 / 

\plot 7 4   6.625 3.375 6.25 3 /
\plot 6.25 3    5.875 2.625    5.5 2 /
\plot 5.5 2    5.25 .125    5 -4 / 

\plot -1 -10     -2.25 -8.75    -3.5 -8 / 
\plot -3.5 -8    -4.75 -7.25    -6 -6 / 
\plot 1 -10     2.25 -8.75    3.5 -8 / 
\plot 3.5 -8    4.75 -7.25    6 -6 / 

\endpicture}\hfil

\vspace{10pt}

\vbox{\hspace{1cm}\beginpicture
\setcoordinatesystem units <.125 cm,.125cm> point at 0 0
\setplotarea x from -2 to 2, y from -2 to 2
\linethickness=.7pt


\setdots <2 pt >
\putrectangle corners at 16 16 and -16 -16

\put{$r(\Delta_{1} \circ \Delta_{2})$} at -8.5 14

\setsolid
\setshadegrid span <.3pt>
\shaderectangleson

\putrectangle corners at 0 12 and 4 10  

\putrectangle corners at 10 6 and 6 4 

\putrectangle corners at 0 -12 and 4 -10  

\putrectangle corners at 10 -6 and 6 -4 

\plot 2 16   2  12 / 
\plot 1 10 1 -10 / 
\plot 2 -16   2 -12 / 

\setquadratic
\plot 3 10     4.25 8.75    5.5 8 / 
\plot 5.5 8    6.75 7.25    8 6 / 

\plot 7 4    7.375 3.375    7.75 3 / 
\plot 7.75 3    8.125 2.625    8.5 2 /
\plot 8.5 2    8.75 .125   9 -4 / 

\plot 9 4   8.625 3.375 8.25 3 /
\plot 8.25 3    7.875 2.625    7.5 2 /
\plot 7.5 2    7.25 .125    7 -4 / 

\plot 3 -10     4.25 -8.75    5.5 -8 / 
\plot 5.5 -8    6.75 -7.25    8 -6 / 
\endpicture

}\hfil
\caption{Three examples of braided diagrams over semigroup presentations.}
\label{figure:1}
\end{figure}

\begin{example} Figure \ref{figure:1} gives three examples of braided diagrams over semigroup presentations. On the upper left, we have a braided $(abc,bca)$-diagram $\Delta$ over the semigroup presentation $\mathcal{P} = \langle a, b, c, \mid ab = ba, bc=cb, ac=ca \rangle$. More properly speaking, this is the immersed image of such a diagram under a mapping into the plane. All of the defining features of $\Delta$ are illustrated. The frame appears as a dotted box, and the left-to-right orientations of the transistors are the obvious ones. Note that the apparent crossing of wires above the bottom left transistor represents a double point of the projection, since the wires are necessarily disjoint (by Definition \ref{definition:diagram}) in the original diagram $\Delta$. Note also that it is unnecessary to specify whether the ``$c$" wire crosses over the ``$b$" wire or vice versa, since such overcrossing data is not part of the definition of $\Delta$. (In effect, we allow any two wires of a braided diagram  to pass through each other.) This means that the descriptor ``braided" is a misnomer; there is no true braiding.

In the upper right is the concatenation $\Delta_{1} \circ \Delta_{2}$ of $\Delta_{1}$ with $\Delta_{2}$, where both are diagrams over the semigroup presentation $\mathcal{P} = \langle x \mid x = x^{2} \rangle$.  (We omit the labels of the wires, since each such label is ``$x$".) Here $\Delta_{1}$ is a braided $(x,x^{4})$-diagram and $\Delta_{2}$ is a braided $(x^{4},x)$-diagram. 

At the bottom is the result of removing all dipoles from the concatenation $\Delta_{1} \circ \Delta_{2}$. (See Definition \ref{definition:dipoles}.)
\end{example}
 
It is reasonably clear that, for a fixed word $w \in \Sigma^{+}$, the braided $(w,w)$-diagrams over $\mathcal{P}$ form a semigroup under concatenation. We can define inverses using the idea of a dipole.
 
\begin{definition} \label{definition:dipoles} (Dipoles)
Suppose that $T_{1}$ and $T_{2}$, $T_{1} \preccurlyeq T_{2}$, are transistors in a braided semigroup diagram $\Delta$ over 
$\mathcal{P}$. Let $w_{1}$, $\ldots$, $w_{n}$ be a complete list of wires attached at the top of $T_{1}$, listed in the left-to-right order of their attaching contacts. We say that $T_{1}$ and $T_{2}$ form a \emph{dipole} if:
\begin{enumerate}
\item the tops of the wires $w_{1}$, $\ldots$, $w_{n}$ are glued in left-to-right order ($w_{1}$ leftmost, etc.) to the bottom of $T_{2}$, and no other wires are attached to the bottom of $T_{2}$, and 
\item the top label of $T_{2}$ is the same as the bottom label of $T_{1}$.
\end{enumerate}
In this case, the result of removing the transistors $T_{1}$ and $T_{2}$ and the wires $w_{1}$, $\ldots$, $w_{n}$, and then attaching the top contacts of $T_{2}$ to the bottom contacts of $T_{1}$ (in left-to-right order-preserving fashion) is called \emph{removing a dipole}. The inverse 
operation is called \emph{inserting a dipole}.

A diagram that contains no dipoles is called \emph{reduced}. 

Two diagrams are \emph{equivalent modulo dipoles} if one diagram can be obtained from the other by repeatedly inserting or removing dipoles. The relation ``equivalent modulo dipoles" is indeed an equivalence relation. 
\end{definition}

\begin{remark} \label{remark:reduced}
A standard argument using Newman's Lemma \cite{Newman} shows that each equivalence class modulo dipoles contains a unique reduced diagram. If $\Delta$ is a diagram, then we let $r(\Delta)$ denote the unique reduced representative in its equivalence class.
\end{remark}

\begin{definition}
(Braided Diagram Groups) Let $\mathcal{P} = \langle \Sigma \mid \mathcal{R} \rangle$ be a semigroup presentation; let $w \in \Sigma^{+}$. The set of all equivalence classes of $(w,w)$-diagrams over $\mathcal{P}$ (under the dipole equivalence relation) is a group under concatenation. It is denoted $D_{b}(\mathcal{P}, w)$ and called the \emph{braided diagram group over $\mathcal{P}$ based at $w$}.

The set of all equivalence classes of planar $(w,w)$-diagrams over $\mathcal{P}$ is a group under concatenation, denoted $D(\mathcal{P}, w)$. This is the \emph{[planar] diagram group over $\mathcal{P}$ based at $w$}.

Similarly, the group of all equivalence classes of annular $(w,w)$-diagrams over $\mathcal{P}$ is denoted $D_{a}(\mathcal{P}, w)$ and called the \emph{annular diagram group over $\mathcal{P}$ based at $w$}.
\end{definition} 

\subsection{The diagram complex} \label{subsection:diagramcomplex}

In this section, we will describe a CAT($0$) cubical complex $\widetilde{K}_{b}(\mathcal{P}, w)$, the \emph{diagram complex}, on which $D_{b}(\mathcal{P}, w)$ acts properly and isometrically. The groups $D(\mathcal{P},w)$ and $D_{a}(\mathcal{P},w)$ admit actions on similar complexes, $\widetilde{K}(\mathcal{P},w)$ and $\widetilde{K}_{a}(\mathcal{P},w)$, respectively. The definitions of the latter complexes may be obtained from the definition of $\widetilde{K}_{b}(\mathcal{P},w)$ simply by replacing all mentions of braided diagrams with planar or annular diagrams, respectively. We will therefore concentrate on the case of $\widetilde{K}_{b}(\mathcal{P}, w)$.

We continue (in this subsection and the next) to let $\mathcal{P} = \langle \Sigma \mid \mathcal{R} \rangle$ be an arbitrary semigroup presentation such that $(u,u) \notin \mathcal{R}$ (for all $u \in \Sigma^{+}$), and let $w \in \Sigma^{+}$.

\begin{definition} \label{definition:verticesofK}
(Vertices of $\widetilde{K}_{b}(\mathcal{P}, w)$) A vertex of $\widetilde{K}_{b}(\mathcal{P}, w)$ is an equivalence class of reduced braided $(w,\ast)$-diagrams over $\mathcal{P}$; the equivalence relation in question is
\[ \Delta_{1} \sim \Delta_{2} \quad \Leftrightarrow \quad \Delta_{1} \circ \Pi = \Delta_{2}, \]
where $\Pi$ is a diagram with no transistors (i.e., a \emph{permutation diagram}).

We can think of a vertex as simply a braided $(w,\ast)$-diagram such that the bottommost wires do not attach to the bottom of the frame. We may thus sometimes refer to a reduced braided $(w,\ast)$-diagram $\Delta$ over $\mathcal{P}$ as a vertex itself, even though (as above) a vertex is technically an equivalence class of diagrams. We hope that this causes no confusion.
\end{definition}

\begin{definition}
(Cubes of $\widetilde{K}_{b}(\mathcal{P}, w)$) A marked cube in $\widetilde{K}_{b}(\mathcal{P}, w)$ is determined by a pair $(\Delta, \Psi)$, where $\Delta$ is a reduced braided $(w,v)$-diagram (for some $v \in \Sigma^{+}$), and $\Psi$ is a thin braided $(v,u)$-diagram (for some $u \in \Sigma^{+}$). Here, a \emph{thin diagram} $\Psi$ is such that no two transistors of $\Psi$ are comparable in the partial order $<$ on transistors (as defined in Definition \ref{definition:diagram}).

\end{definition}

\begin{definition}
(Realization of a marked cube) Given a marked cube $(\Delta, \Psi)$, we define its 
\emph{realization} $| \Delta, \Psi |$ as follows. If $\Psi$ contains $n$ transistors, then choose a numbering of the transistors $1, \ldots, n$. A corner of the cube $[0,1]^{n}$ is (obviously) identified with a binary string of length $n$. We specify a labelling of each corner $w = (a_{1}, \ldots, a_{n})$
of $[0,1]^{n}$ by a vertex of $\widetilde{K}_{b}(\mathcal{P}, w)$ as follows:
\begin{itemize}
\item if $a_{i} = 0$, then we remove the $i$th transistor of $\Psi$ by clipping the wires above it;
\item if $a_{i} = 1$, then we leave the $i$th transistor alone.
\end{itemize}
If we let $\Psi_{w}$ denote the result of performing the above operations, then
$r( \Delta \circ \Psi_{w})$ is the label of the corner $w$. 

This labelling of the vertices is uniquely determined by the pair $(\Delta, \Psi)$ and the numbering of the transistors.
\end{definition}

\begin{figure}[h]
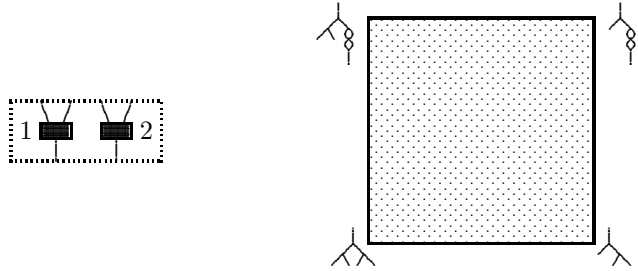
 
\vbox{\hspace{1cm}\beginpicture
\setcoordinatesystem units <.1 cm,.1cm> point at 0 0
\setplotarea x from -2 to 2, y from -2 to 2
\linethickness=.9pt

\setdots <2 pt >
\putrectangle corners at -30 4 and -50 -4

\setsolid
\setshadegrid span <.3pt>
\shaderectangleson
\putrectangle corners at -34 1 and -38 -1
\put {$1$} at -48 0
\put {$2$} at -32 0
\putrectangle corners at -42 1 and -46 -1

\plot -36 -1 -36 -4 /
\plot -44 -1 -44 -4 /

\setquadratic
\plot -34 4    -34.25 3.0625    -34.5 2.5 / 
\plot -34.5 2.5     -34.75 1.9375    -35 1 / 
\plot -38 4    -37.75 3.0625   -37.5 2.5 /
\plot -37.5 2.5     -37.25 1.9375  -37 1 / 
\plot -46 4    -45.75 3.0625    -45.5 2.5 /
\plot -45.5 2.5     -45.25 1.9375    -45 1 / 
\plot -42 4    -42.25 3.0625   -42.5 2.5 /
\plot -42.5 2.5     -42.75 1.9375  -43 1 / 

\setsolid
\setshadegrid span <2pt>
\putrectangle corners at 27.5 15 and -2.5 -15

\setlinear
\plot -4.5 -13  -4.5 -15 / 
\plot -4.5 -15   -7.3 -17.8 /
\plot -4.5 -15   -1.7 -17.8 /
\plot -5.05 -17.8   -5.9 -16.4 /
\plot -3.95 -17.8   -3.1 -16.4 / 

\plot 29.5 -13  29.5 -15 / 
\plot 29.5 -15   32.3 -17.8 /
\plot 29.5 -15  28.1 -16.4 /
\plot 30.05 -17.8   30.9 -16.4 / 

\plot -6.5 17  -6.5 15 / 
\plot -6.5 15   -9.3 12.2 /
\plot -6.5 15   -5.1 13.6 /  
\plot -5.1 10.8 -5.1 8.8 /  
\plot -7.05 12.2   -7.9 13.6 / 

\plot 31 17  31 15 / 
\plot 31 15   32.4 13.6 /
\plot 29.6 13.6    31 15 / 
\plot 32.4 10.8    32.4  8.8 / 

\setquadratic
\plot -5.1 13.6    -5.6 12.9    -5.1  12.2 / 
\plot -5.1 13.6    -4.6 12.9    -5.1 12.2 / 
\plot -5.1 12.2    -5.6 11.5    -5.1  10.8 / 
\plot -5.1 12.2    -4.6 11.5    -5.1 10.8 / 

\plot 32.4 13.6    31.9 12.9    32.4  12.2 / 
\plot 32.4 13.6    32.9 12.9    32.4 12.2 / 
\plot 32.4 12.2    31.9 11.5    32.4  10.8 / 
\plot 32.4 12.2    32.9 11.5    32.4 10.8 / 


\endpicture}\hfil
\caption{A thin braided $(x^{4}, x^{2})$-diagram  $\Psi$ over $\mathcal{P} = \langle x \mid x = x^{2} \rangle$ with a numbering of its transistors (left) and the associated marked square $(\Delta_{1}, \Psi)$ (right). Note that $\Delta_{1}$ is as in Figure \ref{figure:1}.}
\label{figure:2}
\end{figure}

\begin{example}
Consider the pair $(\Delta_{1}, \Psi)$, where $\Delta_{1}$ is as in Figure \ref{figure:1} and $\Psi$ appears in the left side of Figure \ref{figure:2}. Both are diagrams over $\mathcal{P} = \langle x \mid x = x^{2} \rangle$. The associated marked cube appears in the right side of Figure \ref{figure:2}. Note that we have used a modified  notation to label the corners of the square. In particular, we have shrunk each transistor to a single vertex; this is harmless, since the defining data of the braided diagrams can still be read from the picture. 

The frames have been omitted entirely. By Definition \ref{definition:verticesofK}, the left-right ordering of the wires attached to the bottom of each frame is irrelevant, while the left-right ordering of the wires at the top of each frame is obvious, so the omission of the frames is also harmless.

We note finally that the figure-eights occurring in the top left and top right corners depict pairs of transistors with crossing wires in between (compare to the right-hand picture in Figure \ref{figure:1}). In particular, the middle ``vertices" in these figure-eights are wire crossings, not transistors. 
\end{example}

\begin{definition} (The diagram complex $\widetilde{K}_{b}(\mathcal{P},w)$)
Pick a realization $| \Delta, \Psi |$ for each cube $(\Delta, \Psi)$ as above. We glue two such realizations of cubes together along faces whose vertices have the same labelling. 
\end{definition}

\begin{proposition} \label{proposition:orbit}
The result of the above gluing is a CAT(0) cubical complex \cite{Far(th), Far(pg)}, and thus contractible. There is a natural group action of $D_{b}(\mathcal{P}, w)$ on 
$\widetilde{K}_{b}(\mathcal{P}, w)$ by the rule
\[ \widetilde{\Delta} \cdot ( \Delta, \Psi ) = ( r( \widetilde{\Delta} \circ \Delta), \Psi ). \]
[Here $r(\Delta)$ is as defined in Remark \ref{remark:reduced}.] This action is proper and by isometries. It is not necessarily cocompact. In fact, if $\Delta_{1}$ and $\Delta_{2}$ are vertices, then $\Delta_{1}$ and $\Delta_{2}$ lie in the same orbit if and only if the bottom labels of the diagrams $\Delta_{1}$ and $\Delta_{2}$ are permutations of each other. \qed
\end{proposition}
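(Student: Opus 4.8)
The plan is to prove Proposition \ref{proposition:orbit} in two phases: first establish the general structural facts (that the gluing yields a CAT(0) cubical complex on which $D_b(\mathcal{P},w)$ acts properly by isometries), then separately analyze the orbit structure, which is the genuinely new content of the statement.

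\textbf{The general structural facts.} For the CAT(0) claim I would invoke the construction of \cite{Far(th), Far(pg)}: the vertices, cubes, and gluings described above are precisely an instance of the cube complexes built there from diagram data, so one only needs to check that the definitions match and quote contractibility and the CAT(0) property (equivalently, verify Gromov's link condition, which is handled in those references via the combinatorics of ``thin'' diagrams — no two transistors comparable — so that the link of a vertex is a flag complex). Contractibility then follows from the CAT(0) metric. For the group action, one checks that the rule $\widetilde{\Delta}\cdot(\Delta,\Psi) = (r(\widetilde{\Delta}\circ\Delta),\Psi)$ is well-defined on equivalence classes (using that $r(-)$ is well-defined by Remark \ref{remark:reduced}, and that left-concatenation commutes with the permutation-diagram equivalence defining vertices), that it respects the face relations of cubes (the marking $\Psi$ is untouched, and clipping transistors of $\Psi$ commutes with left-concatenation by $\widetilde{\Delta}$ followed by reduction), and hence extends to a simplicial, isometric action on the realization. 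Properness follows because the stabilizer of a cube $(\Delta,\Psi)$ consists of those $\widetilde{\Delta}$ with $r(\widetilde{\Delta}\circ\Delta) = \Delta$, i.e. $\widetilde{\Delta}$ equivalent to $\Delta\circ\Delta^{-1}$ modulo the vertex-equivalence — a finite group (it embeds in a symmetric group permuting the finitely many bottom wires of $\Delta$). Non-cocompactness will be immediate once the orbit description is in hand, since there are infinitely many vertices with pairwise non-permutation-equivalent bottom labels.

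\textbf{The orbit computation.} This is the step I expect to require the most care. One direction is easy: if $\Delta_1$ and $\Delta_2$ are vertices in the same orbit, say $\Delta_2 = r(\widetilde{\Delta}\circ\Delta_1)$ up to a permutation diagram, then since $\widetilde{\Delta}$ is a $(w,w)$-diagram, concatenation and dipole reduction neither create nor destroy bottom wires except by matching them through the $(w,w)$-block, so the multiset of labels on the bottom wires of $\Delta_2$ equals that of $\Delta_1$; hence the bottom labels are permutations of one another. For the converse, suppose $\Delta_1$ is a reduced braided $(w,v_1)$-diagram and $\Delta_2$ a reduced braided $(w,v_2)$-diagram with $v_2$ a permutation of $v_1$. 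The idea is to build $\widetilde{\Delta} = r(\Delta_2 \circ \Delta_1^{-1})$, where $\Delta_1^{-1}$ denotes the braided diagram obtained by flipping $\Delta_1$ top-to-bottom (and which is a braided $(v_1,w)$-diagram). One must check: (i) $\Delta_2\circ\Delta_1^{-1}$ makes sense after reconciling $v_1$ and $v_2$ by inserting a permutation diagram (legal because in the braided setting wires may freely cross, so any permutation diagram exists between two words that are permutations of each other); (ii) $\widetilde{\Delta}$ is a braided $(w,w)$-diagram, hence an element of $D_b(\mathcal{P},w)$; and (iii) $r(\widetilde{\Delta}\circ\Delta_1)$ equals $\Delta_2$ up to a permutation diagram — this is the diagram-calculus analogue of the cancellation $\Delta_2\Delta_1^{-1}\Delta_1 = \Delta_2$, and it follows because $\Delta_1^{-1}\circ\Delta_1$ is a diagram equivalent modulo dipoles to a permutation diagram on $v_1$, so after reduction only a permutation diagram survives, which is absorbed by the vertex-equivalence of Definition \ref{definition:verticesofK}.

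\textbf{Main obstacle.} The crux is verifying that the ``inverse then cancel'' manipulation is legitimate purely at the level of \emph{reduced} braided diagrams and of the vertex-equivalence relation — i.e. that $\Delta_1^{-1}\circ\Delta_1$ reduces (modulo dipoles) to a permutation diagram, and that the residual permutation introduced when matching $v_1$ to $v_2$ is exactly the freedom quotiented out in Definition \ref{definition:verticesofK}. This requires a careful bookkeeping of how dipole cancellations propagate up the transistor partial order, together with the observation (special to the braided case, and the reason the planar statement would differ) that every permutation diagram on a given word is available. Once this is pinned down, the proposition follows.
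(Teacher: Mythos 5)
Your proposal is correct, and in fact it supplies more than the paper does: the paper states this proposition with no proof at all, deferring the CAT(0)/contractibility and action claims entirely to \cite{Far(th), Far(pg)} and asserting the orbit description without argument. Your sketch correctly isolates the two points that actually need checking and that are specific to the braided setting: (i) reduction of dipoles preserves the bottom label, so the bottom label up to permutation is an orbit invariant (this gives the forward direction and non-cocompactness); and (ii) for the converse, the braided category contains a permutation $(v_2,v_1)$-diagram whenever $v_2$ is a rearrangement of $v_1$, so $\widetilde{\Delta}=r(\Delta_2\circ\Pi\circ\Delta_1^{-1})$ is a legitimate $(w,w)$-diagram carrying $[\Delta_1]$ to $[\Delta_2]$ once one knows $\Delta_1^{-1}\circ\Delta_1$ reduces to the trivial diagram — which is the same standard cancellation that makes $D_b(\mathcal{P},w)$ a group in the first place, and is exactly where the planar statement would differ (there the invariant is the bottom label itself, not its permutation class). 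One small imprecision: the stabilizer of a vertex $[\Delta]$ is not $\{\widetilde{\Delta}: r(\widetilde{\Delta}\circ\Delta)=\Delta\}$ but $\{\widetilde{\Delta}: r(\widetilde{\Delta}\circ\Delta)=\Delta\circ\Pi \text{ for some label-preserving permutation diagram } \Pi\}$, i.e.\ $\widetilde{\Delta}\sim\Delta\circ\Pi\circ\Delta^{-1}$; your parenthetical about embedding into the symmetric group on the bottom wires shows you intend exactly this, and the finiteness conclusion (hence properness, together with cellularity) is right, but the displayed condition as literally written would give the trivial stabilizer.
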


The proof that the complexes in question are CAT($0$) uses Gromov's famous link condition. We refer the reader to \cite{BH} for one standard account.

\subsection{The link of a vertex in the diagram complex}

\begin{definition} \label{definition:disjointapp}(Disjoint applications of relations)
Let $\Delta_{1}$ and $\Delta_{2}$ be thin braided $(w, \ast)$-diagrams over $\mathcal{P}$. 
Let $w_{1}, \ldots, w_{n}$ be the wires of $\Delta_{1}$ that meet the top of the frame, listed in the left-to-right order in which they meet the top of the frame. Let $\hat{w}_{1}, \ldots, \hat{w}_{n}$ be the wires of $\Delta_{2}$ that meet the top of its frame, ordered similarly. (Note that the number of such wires is the same in both cases, since both numbers are equal to the length of $w$.)

Let
\[ S_{1} = \{ j \in \{ 1, \ldots, n \} \mid \text{the bottom of } w_{j} \text{ is attached to a transistor of }
\Delta_{1} \} \]
and 
\[ S_{2} = \{ j \in \{ 1, \ldots, n \} \mid \text{the bottom of } \hat{w}_{j} \text{ is attached to a transistor of }
\Delta_{2} \}. \]
We say that $\Delta_{1}$ and $\Delta_{2}$ represent \emph{disjoint applications of relations to $w$}, or simply that $\Delta_{1}$ and $\Delta_{2}$ are \emph{disjoint}, if $S_{1} \cap S_{2} = \emptyset$.
\end{definition}

\begin{proposition} \cite{Far(th), Far(pg)} \label{proposition:linkdescription}(Description of the link)
Let $v$ be a vertex of $\widetilde{K}_{b}(\mathcal{P}, w)$; let $\Delta$ represent $v$. Assume $\Delta$ is a braided $(w, \hat{w})$-diagram. Define an abstract simplicial complex $L(v)$ as follows: The vertices are the braided $(\hat{w}, \ast)$-diagrams over $\mathcal{P}$ that contain exactly one transistor. A finite collection of such vertices spans a simplex if and only if the vertices are pairwise disjoint (in the sense of Definition \ref{definition:disjointapp}). 

The link of $v$ in $\widetilde{K}_{b}(\mathcal{P}, w)$ is isomorphic to $L(v)$. \qed
\end{proposition}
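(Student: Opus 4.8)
The plan is to combine the standard description of the link of a vertex in a cube complex with a normal form for the cubes incident to $v$. Recall that for a vertex $v$ of any cube complex $X$, the link $\mathrm{lk}(v)$ is the simplicial complex with one $(k-1)$-simplex for each corner at $v$ of each $k$-cube of $X$, with face relation induced by that of cubes, and with two such simplices identified exactly when the associated corners lie in a common face of $X$ through $v$. Thus it suffices to (i) parametrize the cubes of $\widetilde{K}_{b}(\mathcal{P},w)$ having $v$ as a corner, (ii) match this parametrization with the simplices of $L(v)$, and (iii) check that the face and gluing relations correspond. Throughout, fix the reduced braided $(w,\hat w)$-diagram $\Delta$ representing $v$.

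The heart of the argument is the following normal form: \emph{every $n$-cube of $\widetilde{K}_{b}(\mathcal{P},w)$ incident to $v$ is the realization $|\Delta,\Psi|$ of a marked cube in which $\Psi$ is a thin braided $(\hat w,\ast)$-diagram with $n$ transistors and $v$ is the $\mathbf{0}$-corner; moreover $\Psi$ is unique up to re-indexing its transistors and post-composition by a permutation diagram.} To see this, start from an arbitrary marked cube $(\Delta',\Psi')$ one of whose corners $\mathbf{a}\in\{0,1\}^{n}$ is labelled $v$. Set $\Delta^{\ast}=r(\Delta'\circ\Psi'_{\mathbf{a}})$; since $\Delta^{\ast}$ represents $v$, we have $\Delta^{\ast}=\Delta\circ\Pi$ for some permutation diagram $\Pi$. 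Now construct $\Psi$ from $\Psi'$ by ``reflecting at $\mathbf{a}$'': each transistor $T_{i}$ of $\Psi'$ with $a_{i}=0$ is kept unchanged, while each $T_{i}$ with $a_{i}=1$ is replaced by a transistor $\bar{T}_{i}$ applying the reverse of the relation applied by $T_{i}$, attached at the slot of $\Delta^{\ast}$ created when the corresponding copy of $T_{i}$ was processed during the reduction of $\Delta'\circ\Psi'_{\mathbf{a}}$ (such a slot exists whether or not $T_{i}$ itself survives into $\Delta^{\ast}$); finally post-compose with $\Pi^{-1}$. One then checks that $\Psi$ is thin (the $\bar{T}_{i}$ occupy the same mutually incomparable ``locations'' as the $T_{i}$), and that, for every $\mathbf{c}\in\{0,1\}^{n}$, the diagram $\Delta\circ\Psi_{\mathbf{c}}$ reduces to $r(\Delta'\circ\Psi'_{\mathbf{c}\oplus\mathbf{a}})$, using confluence of dipole reduction (Remark \ref{remark:reduced}, \cite{Newman}) to cancel first, for the indices $i$ with $a_{i}=c_{i}=1$, the dipoles formed by $\bar{T}_{i}$ and the buried copy of $T_{i}$. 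Hence $|\Delta,\Psi|$ and $|\Delta',\Psi'|$ carry identical labellings of their corners, so they are identified in $\widetilde{K}_{b}(\mathcal{P},w)$ under $\mathbf{0}\leftrightarrow\mathbf{a}$. Uniqueness of $\Psi$ follows either by comparing the corner labels $r(\Delta\circ\Psi_{e_{i}})$ for the single-coordinate corners $e_{i}$, or from the fact that a cube of $\widetilde{K}_{b}(\mathcal{P},w)$ is determined by its labelled corner set.

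Granting the normal form, the correspondence with $L(v)$ is now combinatorial. Restricting a thin $n$-transistor $(\hat w,\ast)$-diagram $\Psi$ to its $i$-th transistor produces a one-transistor $(\hat w,\ast)$-diagram $\Psi_{e_{i}}$, a vertex of $L(v)$. Since the bottom of each wire meeting the top of the frame of $\Psi$ is attached to at most one transistor, the sets $S_{1},\dots,S_{n}$ of Definition \ref{definition:disjointapp} associated with $\Psi_{e_{1}},\dots,\Psi_{e_{n}}$ are automatically pairwise disjoint, so $\{\Psi_{e_{1}},\dots,\Psi_{e_{n}}\}$ spans an $(n-1)$-simplex of $L(v)$. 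Conversely, a pairwise-disjoint family of one-transistor $(\hat w,\ast)$-diagrams can be superimposed into a single thin $(\hat w,\ast)$-diagram with the given restrictions, disjointness of the $S_{i}$ being exactly the condition that their wires do not conflict; no higher-order obstruction arises, which is why $L(v)$ is flag. This yields a bijection between normal-form cubes at $v$ and simplices of $L(v)$ (one-transistor diagrams being taken up to post-composition by permutation diagrams, matching the identification already imposed on diagrams representing a common cube). The codimension-one faces of $|\Delta,\Psi|$ through the $\mathbf{0}$-corner are precisely the realizations obtained by deleting one transistor of $\Psi$, which corresponds to deleting one vertex of the associated simplex; iterating, the face poset of cubes-at-$v$ is isomorphic to the face poset of $L(v)$. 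Finally, two normal-form cubes are glued along a common face through $v$ exactly when their thin diagrams share the corresponding sub-collection of transistors, i.e.\ when the associated simplices of $L(v)$ share the corresponding sub-simplex. Assembling (i)--(iii) gives $\mathrm{lk}(v)\cong L(v)$.

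I expect the normal form to be the only real obstacle: one must verify that the reflected diagram $\Psi$ is a genuine thin braided diagram and, above all, that it reproduces all $2^{n}$ corner labels, which is exactly where confluence of dipole reduction is used and where the bookkeeping is fussiest — in particular when a transistor of $\Psi'_{\mathbf{a}}$ is itself cancelled inside $\Delta^{\ast}$. Everything downstream (the disjointness bijection and the face/gluing bookkeeping) is routine. Alternatively one may simply cite \cite{Far(th), Far(pg)}: the braided case differs from the planar one only through Definition \ref{definition:disjointapp}, and that difference enters solely via the observation that each wire feeds at most one transistor.
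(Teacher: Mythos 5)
The paper itself gives no proof of this proposition---it is quoted, with the proof deferred to \cite{Far(th), Far(pg)}---and your argument is a correct reconstruction of the standard argument from those sources: re-mark every cube incident to $v$ so that $v$ becomes the $\mathbf{0}$-corner of a marked cube $(\Delta,\Psi)$ with $\Psi$ thin, then identify thin $(\hat w,\ast)$-diagrams with pairwise-disjoint families of one-transistor diagrams. The one delicate point you flag (re-basing when a transistor $T_i$ of $\Psi'$ cancels into $\Delta'$) does work out, because the dipole condition $\ell_{top}(T_2)=\ell_{bot}(T_1)$ forces the cancelled transistor of $\Delta'$ to carry precisely the labels of your formal reverse $\bar T_i$, so attaching $\bar T_i$ below $r(\Delta'\circ\Psi'_{\mathbf a})$ genuinely recreates it.
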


\subsection{The links associated to $\mathbf{\mathcal{P} = \langle x, a \mid 
xax = x \rangle}$}
The links in the cubical complex $\widetilde{K}_{b}(\mathcal{P}, w)$, where $\mathcal{P} = \langle x, a \mid xax = x \rangle$ and $w \in \{ x, a \}^{+}$, will be especially important in our main argument. We offer a direct description here. 

\begin{definition} \label{definition:links} (Abstract links in $\widetilde{K}_{b}(\mathcal{P},w)$)
Let $v = x^{k}a^{\ell} \in \{ x, a \}^{+}$ ($k, \ell \geq 0$). We let 
\[ D_{v} = \{ (m,n,p) \mid m,n \in \{1, \ldots, k \}; p \in \{1, \ldots, \ell \}, m \neq n \}, \]
and
\[ A_{v} = \{ (m) \mid m \in \{1, \ldots, k \} \}. \]
We define an abstract simplicial complex $\mathrm{lk}(v)$ as follows. The vertex set of $\mathrm{lk}(v)$ is $D_{v} \cup A_{v}$. A non-empty
collection 
\[ S = \{ (m_{1}, n_{1}, p_{1}), \ldots, (m_{\alpha}, n_{\alpha}, p_{\alpha}) \} \cup \{ (m'_{1}), \ldots, (m'_{\beta}) \}
\]
is a simplex in $\mathrm{lk}(v)$ if and only if there are no repetitions
in the lists
\[ m_{1}, n_{1}, \ldots, m_{\alpha}, n_{\alpha}, m'_{1}, \ldots,
m'_{\beta} \]
and
\[ p_{1}, \ldots, p_{\alpha}. \]
(We note that one of $\alpha$ or $\beta$ may be $0$, but not both.)

We say that $\mathrm{lk}(v)$ is the \emph{(abstract) link of the word $v$} in $\widetilde{K}_{b}(\mathcal{P}, w)$.
\end{definition}

\begin{definition} \label{definition:descendinglinks} (Abstract descending links in $\widetilde{K}_{b}(\mathcal{P},w)$) 
Let $v$ and $D_{v}$ be as above. The vertex set
of $\mathrm{lk}_{\downarrow}(v)$ is $D_{v}$. A collection of vertices $S$ determines a simplex exactly under the conditions specified in Definition \ref{definition:links}.

We say that $\mathrm{lk}_{\downarrow}(v)$ is the \emph{(abstract) descending link of the word $v$} in $\widetilde{K}_{b}(\mathcal{P}, w)$.
\end{definition}

\begin{proposition} \label{proposition:descriptionoflink}
Let $v = x^{k}a^{\ell}$. The simplicial complex $\mathrm{lk}(v)$ is isomorphic to the link of $v$ in the complex 
$D_{b}(\mathcal{P}, w)$, where $\mathcal{P} = \langle x, a \mid xax = x \rangle$ and $w = x^{k+m}a^{\ell+m}$, for any $m \in \mathbb{Z}$ such that $k+m, \ell + m \geq 0$. 

The simplicial complex $\mathrm{lk}_{\downarrow}(v)$ is the full subcomplex of $\mathrm{lk}(v)$ determined by the $(xax,x)$-transistors.
\end{proposition}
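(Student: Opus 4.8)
The plan is to read off the link of $v$ from the general description in Proposition~\ref{proposition:linkdescription} and then match it, term for term, with the combinatorial data of Definition~\ref{definition:links}. The first step is to pick a convenient diagram representing $v$. When $m=0$ one takes the trivial diagram $1_{w}$. For general $m$ (assuming $k+m\ge 1$, as one must for $v$ to be a vertex of the complex at all), one writes down an explicit reduced braided $(x^{k+m}a^{\ell+m},\,x^{k}a^{\ell})$-diagram $\Delta$: a ``staircase'' of $|m|$ transistors --- of type $(xax,x)$ when $m>0$ and of type $(x,xax)$ when $m<0$ --- in which each transistor absorbs (resp.\ emits) one fresh $x$-wire and one fresh $a$-wire and passes a single $x$-wire on to the next transistor. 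In every case $v$ is then represented by a braided $(w,\hat w)$-diagram with $\hat w=x^{k}a^{\ell}$, \emph{independently of $m$}; by Proposition~\ref{proposition:linkdescription} the link of $v$ in $\widetilde{K}_{b}(\mathcal{P},w)$ is isomorphic to the complex $L(v)$ built from this $\hat w$, so it suffices to construct an isomorphism $L(v)\cong\mathrm{lk}(v)$.

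I would then make the vertices of $L(v)$ explicit. A braided $(x^{k}a^{\ell},\ast)$-diagram with exactly one transistor $T$ is automatically thin and reduced, and $T$ has type $(xax,x)$ or type $(x,xax)$. Number the $x$-labelled contacts on the top of the frame $1,\dots,k$ and the $a$-labelled contacts $1,\dots,\ell$, in each case from left to right. If $T$ has type $(x,xax)$, its unique top contact is joined by a wire to some $x$-contact $m$, while every other top-frame wire (and the three bottom wires of $T$) runs straight down; since equivalence of braided diagrams preserves the left-to-right order of the top of the frame, and since post-composition with permutation diagrams ignores the bottom of the frame, this diagram is determined by and determines the number $m\in\{1,\dots,k\}$, i.e.\ an element of $A_{v}$. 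If $T$ has type $(xax,x)$, its top contacts read $x,a,x$ from left to right; the first is joined to an $x$-contact $m$, the second to an $a$-contact $p$, the third to an $x$-contact $n\neq m$. Because equivalence is not allowed to reverse the left--right orientation of $T$, the \emph{ordered} pair $(m,n)$ is an invariant of the diagram, so these diagrams correspond bijectively to the triples $(m,n,p)\in D_{v}$. Hence the vertex set of $L(v)$ is canonically $D_{v}\cup A_{v}$, the vertex set of $\mathrm{lk}(v)$, and under this identification the $(xax,x)$-transistors are precisely the vertices lying in $D_{v}$.

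It remains to match simplices. For a one-transistor diagram as above, the set $S$ of Definition~\ref{definition:disjointapp} is exactly the set of top-frame contacts attached to its transistor: for $(m)\in A_{v}$ it is $\{x\text{-contact }m\}$, and for $(m,n,p)\in D_{v}$ it is $\{x\text{-contact }m,\ x\text{-contact }n,\ a\text{-contact }p\}$. By Proposition~\ref{proposition:linkdescription}, a finite collection of such vertices spans a simplex of $L(v)$ if and only if the vertices are pairwise disjoint, i.e.\ if and only if no two of the associated sets $S$ intersect --- equivalently, the $x$-indices $m_{1},n_{1},\dots,m_{\alpha},n_{\alpha},m'_{1},\dots,m'_{\beta}$ are pairwise distinct and the $a$-indices $p_{1},\dots,p_{\alpha}$ are pairwise distinct. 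This is exactly the condition of Definition~\ref{definition:links}, so $L(v)\cong\mathrm{lk}(v)$; restricting to the full subcomplex spanned by the $(xax,x)$-transistor vertices, and using Definition~\ref{definition:descendinglinks}, this restriction is $\mathrm{lk}_{\downarrow}(v)$. Combining with Proposition~\ref{proposition:linkdescription} establishes both assertions.

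The argument is all bookkeeping, and the one point that needs care is the bijection in the middle paragraph: one must verify that the diagram associated to $(m,n,p)$ is genuinely inequivalent to the one associated to $(n,m,p)$ when $m\neq n$ --- true because the relator $xax$ is its own reversal but diagram equivalence may not reverse the left--right orientation of a transistor --- and that, after translating through the numbering of the $x$- and $a$-contacts, ``disjoint'' in the sense of Definition~\ref{definition:disjointapp} turns into exactly the two ``no repetitions'' conditions of Definition~\ref{definition:links}.
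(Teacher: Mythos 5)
Your proof is correct and follows essentially the same route as the paper: the paper simply cites Proposition~\ref{proposition:linkdescription} and defers the vertex/simplex bookkeeping to Remark~\ref{remark:link}, which is exactly the correspondence (descending $(xax,x)$-transistors $\leftrightarrow D_v$, ascending $(x,xax)$-transistors $\leftrightarrow A_v$, disjointness $\leftrightarrow$ the no-repetition conditions) you spell out in full. Your explicit staircase diagram justifying independence of $m$ and your remark on the orientation-invariance of the ordered pair $(m,n)$ are details the paper leaves implicit, but they are consistent with its argument.
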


\begin{proof}
The first statement is a special case of Proposition \ref{proposition:linkdescription}. The second statement follows easily from the identification of the link of $v$ with 
$L(v)$ from Proposition \ref{proposition:linkdescription}.
See also Remark \ref{remark:link} below for more details.
\end{proof}

\begin{remark} \label{remark:link}
The complexes $\mathrm{lk}(v)$ and $\mathrm{lk}_{\downarrow}(v)$, as defined in Definitions \ref{definition:descendinglinks} and \ref{definition:links}, are both flag complexes, as may be easily checked. The latter fact is used in the proof that the larger cubical complexes are CAT($0$) (see Proposition \ref{proposition:orbit}).

The members of $D_{v}$ represent ``descending" applications of relations (i.e., descending attachments of transistors, in which three contacts are on top of the transistor). The numbers $m$ and $n$ in the triple $(m,n,p)$ describe how the left and right top ``$x$"-contacts  of the transistor are connected to the top of the frame, where the number $m$ (for instance) indicates that the top left ``$x$"-contact of the transistor is connected by a wire to the $m$th ``$x$''-contact from the left at the top of the frame. The third coordinate, $p$, describes how the top ``$a$''-contact of the transistor is connected to the top of the frame (namely, at the $p$th ``$a$''-contact from the left). 

In a similar way, the members of $A_{v}$ represent ``ascending" applications of relations, in which a single ``$x$"-contact appears at the top of a transistor. 
\end{remark}

\section{The groups $QF$, $QT$, and $QV$} \label{section:QFQTQV}

In this section, we describe the groups $QF$, $QT$, and $QV$, first as groups of quasi-automorphisms, and second as braided diagram groups over semigroup presentations (or as subgroups of such groups). 

\subsection{The groups $QF$, $QT$, and $QV$ as quasi-automorphism groups}

The results in this subsection are taken from \cite{NJG} and included for the reader's convenience. Most of the definitions are from Section 2 of that source.

\begin{definition} \label{definition:QV}
($QV$) We let $\mathcal{T}$ denote the infinite rooted binary tree. The vertices of $\mathcal{T}$ may be identified with the members of the monoid $\{ 0, 1 \}^{\ast}$, which consists of all finite binary sequences, including the empty sequence, which we denote $\epsilon$.

For a given $v \in \mathcal{T}^{0}$, we regard $v0$ as the left child of $v$ and $v1$ as the right child.

A bijection $h: \mathcal{T}^{0} \rightarrow \mathcal{T}^{0}$ is a member of $QV$ if, for almost all $v \in \mathcal{T}^{0}$, $h$ sends the left (respectively, right) child of $v$ to the left (respectively, right) child of $h(v)$. (Here, ``almost all" means ``with at most finitely many exceptions".) Note, in particular, that $h$ need not preserve adjacency.

The set $QV$ is a group under composition of functions.
\end{definition}

\begin{proposition}
(Tree pair representatives)
Any $h \in QV$ can be specified by a pair $( (T_{1}, \sigma, T_{2}), f)$ where:
\begin{enumerate}
\item $(T_{1}, \sigma, T_{2})$ is the usual tree pair representative of an element of $V$. Thus, $T_{1}$ and $T_{2}$ are finite rooted ordered binary trees with the same number of leaves, and $\sigma$ is a bijection of the leaves, and 
\item $f$ is a bijection between the set of interior nodes of $T_{1}$ and the set of interior nodes of $T_{2}$. (Here, an \emph{interior node} of $T_{i}$ is a vertex of $T_{i}$ that is not a leaf.) \qed
\end{enumerate}
\end{proposition}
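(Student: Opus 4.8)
The plan is to exhibit an explicit bijection between $QV$ and the set of pairs $((T_1,\sigma,T_2),f)$ described in the statement, essentially by the same "expand the domain" mechanism that produces tree-pair representatives for Thompson's group $V$. Recall that a leaf of a finite rooted ordered binary tree $T \subseteq \mathcal{T}$ corresponds to a vertex of $\mathcal{T}$, and that the subtree of $\mathcal{T}$ hanging below that leaf is canonically identified (as a rooted ordered binary tree with the left/right child structure) with $\mathcal{T}$ itself. Thus a pair $(T_1,\sigma,T_2)$ with $\sigma$ a bijection of the leaf sets determines a bijection $h$ of $\mathcal{T}^0$ in the usual way: on the subtree below the $i$th leaf of $T_1$, $h$ acts by the canonical prefix-replacement map onto the subtree below the $\sigma(i)$th leaf of $T_2$; this $h$ respects left/right children at every vertex strictly below the leaves of $T_1$. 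The new ingredient is the bijection $f$: the interior nodes of $T_1$ that are not accounted for by the leaf maps are precisely the "exceptions," and $f$ records how $h$ permutes them, with no constraint (since at interior nodes $h$ need not respect the child structure).

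Concretely, I would first fix the forward map: given $((T_1,\sigma,T_2),f)$, define $h \colon \mathcal{T}^0 \to \mathcal{T}^0$ by declaring $h(v) = f(v)$ when $v$ is an interior node of $T_1$, and $h(v)$ to be the prefix-replacement value when $v$ lies at or below a leaf of $T_1$. One checks this is a well-defined bijection (the interior nodes of $T_1$ and the vertices at-or-below its leaves partition $\mathcal{T}^0$; likewise for $T_2$; $\sigma$ and $f$ are bijections) and that it lies in $QV$: outside the finite set of interior nodes of $T_1$, $h$ is a prefix replacement, hence respects left and right children. So every such pair yields an element of $QV$.

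Next I would go the other way: given $h \in QV$, let $E \subseteq \mathcal{T}^0$ be the finite set of vertices $v$ at which $h$ fails to send the left/right children of $v$ to the left/right children of $h(v)$. Choose a finite rooted ordered binary subtree $T_1$ large enough that $E$ is contained in the set of interior nodes of $T_1$ (possible since $E$ is finite and any finite vertex set is contained in the interior-node set of some finite subtree — here one uses that $\mathcal{T}$ is infinite so a subtree can always be enlarged). For each leaf $v$ of $T_1$, $h$ restricted to the subtree below $v$ respects the child structure everywhere, so it is a prefix-replacement map onto the subtree below some vertex $h(v)$; collecting these target vertices, and using that $h$ is a bijection of $\mathcal{T}^0$, one sees they are exactly the leaves of a finite subtree $T_2$ with the same number of leaves, $\sigma$ is the induced leaf bijection, and $f := h|_{(\text{interior nodes of }T_1)}$ is a bijection onto the interior nodes of $T_2$. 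This produces the desired pair. I would then note that the two constructions are mutually inverse, which gives the claimed representation. The main point requiring care — the analogue of the usual subtlety for $V$ — is the well-definedness/consistency: that enlarging $T_1$ by expanding a leaf (simultaneously expanding the corresponding leaf of $T_2$ via $\sigma$) changes $(T_1,\sigma,T_2)$ and $f$ by the natural "expansion" move but leaves $h$ unchanged, so the correspondence descends correctly; I expect this bookkeeping — keeping track of how $f$ absorbs the two new interior nodes created by an expansion — to be the one genuinely fiddly step, though it is routine. (Strictly, the statement only asserts that such a pair exists, not uniqueness up to expansion, so for the proof as stated it suffices to carry out the construction of one pair from $h$ and verify it lies in $QV$; I would include the remark about expansions for context.)
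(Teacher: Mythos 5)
Your proposal is correct. The paper itself offers no proof (the proposition is stated with a \qed, being quoted from \cite{NJG} as a known fact), and your construction -- prefix replacement below the leaves of $T_1$, with $f$ recording $h$ on the finitely many interior nodes, plus the reverse construction from the finite exception set -- is exactly the standard argument the paper leaves implicit; the one step you gesture at rather than verify (that the images $h(v_i)$ of the leaves of $T_1$ form the leaf set of a finite subtree $T_2$ whose interior nodes are $h(\text{interior nodes of }T_1)$) does go through, since the complement of the disjoint image subtrees is finite and closed under passing to ancestors.
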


\begin{figure}[h]
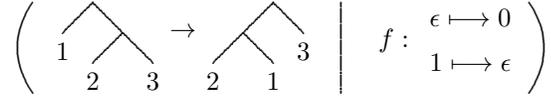
 
\vbox{ \hspace{1cm}\beginpicture
\setcoordinatesystem units <.2cm, .2cm> point at 0 0
\setplotarea x from -2 to 2, y from -2 to 2
\linethickness=.9pt

\plot -10 2    -12 0 /
\plot -8 0     -10 -2 /
\plot -10 2     -6 -2 /

\put {$1$} at -12 -1.25
\put {$2$} at -10 -3.25
\put {$3$} at -6 -3.25
\put {$\rightarrow$} at -4 0 
\put {$1$} at 2 -3.25
\put {$2$} at -2 -3.25
\put {$3$} at 4 -1.25
\plot 6.5 2   6.5 -4 /

\plot -2 -2  2 2  /
\plot 0 0    2 -2 /
\plot 2 2    4 0 /

\put {$\epsilon \longmapsto 0$} at 15.1 1
\put {$1 \longmapsto \epsilon$} at 15.1 -2
\put {$f:$} at 10 -.5

\setquadratic
\plot -14 -4   -15 -1    -14 2 / 
\plot 19 -4   20 -1  19 2 / 

\endpicture}
\caption{An element $h$ of $QV$, described by a tree pair (left) and a bijection $f$ of interior nodes (right).}
\label{figure:3}
\end{figure}


\begin{example}
For instance, consider the pair in Figure \ref{figure:3}. Here we have indicated the bijection $\sigma$ of leaves by direct enumeration. Thus, the leaf $0$ goes to the leaf $01$, $10$ to $00$, and $11$ to $1$. The resulting assignment determines a bijection
$h$ between the vertices of $\mathcal{T}$ that are not interior nodes of $T_{1}$ and the vertices of $\mathcal{T}$ that are not interior nodes of $T_{2}$ once we specify that $h$ takes the left and right children of any non-interior vertex of $T_{1}$ to the left and right children (respectively) of its image. Thus, 
\[ h(010) = \sigma(0)10 = 0110. \]
The bijection $f$ specifies the values of $h$ on the remaining vertices.
\end{example}

\begin{proposition} \label{proposition:QVpi}
The function $\pi: QV \rightarrow V$ determined by projection onto the first coordinate is a surjective homomorphism; the kernel is isomorphic to the group $S_{\infty}$ of self-bijections of $\mathbb{N}$ having finite support.
 \qed
\end{proposition}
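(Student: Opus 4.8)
The plan is to work directly with the tree-pair-plus-interior-bijection representatives from the preceding proposition. Given $h \in QV$ with representative $((T_1,\sigma,T_2),f)$, the projection $\pi(h)$ is the element of $V$ determined by $(T_1,\sigma,T_2)$, simply forgetting $f$. I would first check that $\pi$ is well defined: two representatives of the same element of $QV$ differ by simultaneous expansions/reductions of the trees (caret additions at corresponding leaves), and such a move changes the tree pair of the underlying $V$-element by exactly the corresponding expansion, so the image in $V$ is unchanged. Homomorphism: composing $h_1, h_2 \in QV$ can be carried out, after passing to a common subdivision, on tree pairs $(T_1,\sigma_1,S)$ and $(S,\sigma_2,T_2)$ with matching interior bijections; the underlying composite in $V$ is computed by the identical common-subdivision procedure on the first coordinates, so $\pi(h_1 h_2) = \pi(h_1)\pi(h_2)$.

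Next I would establish surjectivity. Given any $g \in V$ with tree pair $(T_1,\sigma,T_2)$, the trees $T_1$ and $T_2$ have the same number of leaves, hence the same number of interior nodes, so we may choose any bijection $f$ between their interior node sets; then $((T_1,\sigma,T_2),f)$ represents an element of $QV$ mapping to $g$. Thus $\pi$ is onto.

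The main content is the identification of the kernel. An element $h \in \ker \pi$ has a representative $((T,\mathrm{id},T),f)$: the underlying $V$-element is trivial, so after enlarging to a common tree we may take $T_1 = T_2 = T$ with $\sigma$ the identity leaf permutation. Such an $h$ fixes every vertex of $\mathcal{T}$ that is a leaf of $T$ or a descendant of such a leaf, and permutes the (finite) set of interior nodes of $T$ via $f$; so $h$ is a finitely supported permutation of $\mathcal{T}^0$ that moreover maps interior nodes of $T$ among themselves. Conversely, any finitely supported permutation of $\mathcal{T}^0$ is realized this way: choose a finite subtree $T$ large enough that its interior node set contains the (finite) support, set $\sigma = \mathrm{id}$, and let $f$ be the given permutation on interior nodes of $T$. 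This gives a bijection $\ker\pi \to S_\infty$, where $S_\infty$ is identified with finitely supported permutations of the countable set $\mathcal{T}^0$ (equivalently of $\mathbb{N}$ after a fixed enumeration of $\mathcal{T}^0$), and one checks it is a homomorphism since composition in $\ker\pi$ is just composition of permutations of $\mathcal{T}^0$.

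The step I expect to be the main obstacle is the well-definedness and kernel-description bookkeeping rather than any deep idea: one must be careful that the notion of ``representative'' for $QV$-elements genuinely allows reduction to the normal form $((T,\mathrm{id},T),f)$ on the kernel, and that enlarging $T$ (which forces $f$ to be extended by the identity on newly created interior nodes) does not alter the resulting permutation of $\mathcal{T}^0$. Once the reduction move on $QV$-representatives is pinned down (expansion adds a caret at a leaf of $T_1$ and the matching leaf of $T_2$ under $\sigma$, and extends $f$ by sending the new interior node of $T_1$ to the new interior node of $T_2$), each of these verifications is routine, and the identification with $S_\infty$ follows.
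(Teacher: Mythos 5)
Your argument is correct and complete in outline; the one point you rightly flag as needing care (that expansion of a $QV$-representative adds a caret at a leaf $l$ of $T_1$ and at $\sigma(l)$ of $T_2$, extending $f$ by $l \mapsto \sigma(l)$) is handled correctly, and the kernel identification is sound since any tree-pair representative of $1 \in V$ already has the form $(T,\mathrm{id},T)$. The paper itself states this proposition without proof, as a result recalled from Nucinkis and St.~John-Green, so there is no argument to compare against; yours is the standard verification one would expect.
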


We easily define $QF$ and $QT$ in terms of $\pi$:

\begin{definition} \label{definition:QFQT}
(Definitions of $QF$ and $QT$)
\begin{enumerate}
\item $QF = \pi^{-1}(F)$;
\item $QT = \pi^{-1}(T)$.
\end{enumerate}
\end{definition}

\subsection{The groups $QF$, $QT$, and $QV$ as diagram groups} \label{subsection:QVdiagramgroup}

The observation that $QV$ is a braided diagram group first appeared in 
\cite{FH2}; see Example 4.4 from that source. Here we will develop the idea in somewhat more depth.

\begin{proposition}
The group $QV$ is isomorphic to $D_{b}(\mathcal{P}, x)$, where
$\mathcal{P} = \langle x, a \mid x = xax \rangle$. 
\end{proposition}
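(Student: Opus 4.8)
The plan is to build mutually inverse maps between $QV$ and $D_b(\mathcal{P},x)$ by passing through the tree-pair-with-node-bijection picture of Proposition on tree pair representatives. Given $h \in QV$, represented by $((T_1,\sigma,T_2),f)$ where $T_1,T_2$ are finite binary trees with $n$ leaves and $n-1$ interior nodes, I would construct a braided $(x,x)$-diagram $\Delta_h$ over $\mathcal{P} = \langle x,a \mid x = xax\rangle$ as follows. The top half of $\Delta_h$ encodes $T_1$: each interior node of $T_1$ becomes an $(x,xax)$-transistor (a ``caret''), so that a caret reads $x$ on top and $xax$ on the bottom; stacking these carets according to the shape of $T_1$ produces a planar $(x, u_1)$-diagram whose bottom word $u_1 \in \{x,a\}^+$ has one $x$ for each leaf of $T_1$ and one $a$ for each interior node. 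The bottom half of $\Delta_h$ is the mirror image built from $T_2$, an $(u_2, x)$-diagram where $u_2$ again has one $x$ per leaf and one $a$ per interior node of $T_2$. Since $T_1$ and $T_2$ have the same number of leaves and the same number of interior nodes, $u_1$ and $u_2$ are permutations of each other, and we splice the two halves together with a permutation (transistor-free) diagram in the middle that realizes $\sigma$ on the $x$-wires (matching leaves of $T_1$ to leaves of $T_2$) and realizes $f$ on the $a$-wires (matching interior nodes of $T_1$ to interior nodes of $T_2$). This yields a braided $(x,x)$-diagram; set $[\Delta_h] \in D_b(\mathcal{P},x)$ to be its dipole-reduced class.

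Next I would check that the map $h \mapsto [\Delta_h]$ is well defined and a homomorphism. Well-definedness amounts to checking that expanding the tree pair $(T_1,\sigma,T_2)$ by adding a matching caret to a leaf of each tree (the basic move generating the equivalence on tree pair representatives, together with the induced extension of $f$) changes $\Delta_h$ only by insertion of a dipole: indeed, adding a caret below a leaf $x$-wire of $T_1$ and the corresponding caret above the matching $x$-wire of $T_2$, linked by the permutation diagram, is precisely an $(x,xax)$--$(xax,x)$ dipole. For the homomorphism property, concatenation $\Delta_{h_1} \circ \Delta_{h_2}$ should, after reduction, equal $\Delta_{h_1 h_2}$; this is the standard ``common expansion'' argument — choose tree pair representatives of $h_1$ and $h_2$ with matching middle trees, so that the bottom tree of $h_1$'s representative equals the top tree of $h_2$'s, stack, and cancel the resulting dipoles, tracking that the leaf bijections compose correctly and the interior-node bijections compose correctly. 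One must also confirm that conditions $(1)$ and $(2)$ of Definition of diagram are met: every transistor is an $(x,xax)$- or $(xax,x)$-transistor, hence its label pair lies in $\mathcal{R}$, and the partial order $<$ is strict because the construction lays the carets out planarly in the two halves with only a transistor-free diagram between.

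For injectivity and surjectivity I would argue that every reduced braided $(x,x)$-diagram over $\mathcal{P}$ arises this way. A transistor with top label in $\Sigma^+$ and bottom label in $\Sigma^+$ related by $x = xax$ is exactly an $(x,xax)$- or $(xax,x)$-transistor; using the partial order, the ``upward-pointing'' transistors (those with a single $x$ on top) near the top frame assemble into a tree $T_1$, the ``downward-pointing'' ones near the bottom frame into a tree $T_2$, and what remains between them is transistor-free, i.e.\ a permutation recording $\sigma$ on $x$-wires and $f$ on $a$-wires. Making this assembly precise — showing a reduced diagram has no ``interleaving'' of up- and down-transistors beyond this normal form, so that it genuinely factors as (expansion tree)$\circ$(permutation)$\circ$(reduction tree) — is the step I expect to be the main obstacle; it is essentially a normal-form / leaf-structure lemma for reduced diagrams over this particular presentation, and care is needed because the single $a$-letter produced by each caret must be tracked separately from the $x$-letters throughout. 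Granting that normal form, injectivity follows because distinct reduced diagrams give distinct $(T_1,\sigma,f,T_2)$ data up to the expansion equivalence, hence distinct elements of $QV$; and the inverse map $[\Delta] \mapsto h_\Delta$ is then manifestly a two-sided inverse to $h \mapsto [\Delta_h]$, completing the isomorphism.
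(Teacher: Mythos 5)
Your proposal is correct and takes essentially the same route as the paper's sketch, just running the isomorphism in the opposite direction (from tree-pair-plus-node-bijection data to diagrams rather than from reduced diagrams to dissections of the tree); the underlying correspondence, the dipole-invariance of the map, and the homomorphism check via common refinements are the same in both. The normal-form step you flag as the main obstacle is exactly what the paper dispatches in one line: whenever a positive ($(x,xax)$-) transistor lies immediately below a negative one, the single wire on top of the former is the single wire on the bottom of the latter and the labels match, so the pair is a dipole; hence in a reduced diagram no positive transistor is below a negative one, and the diagram factors as (positive part) $\circ$ (permutation) $\circ$ (negative part) as you require.
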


\begin{proof}
We sketch the isomorphism. Let $\Delta$ be a braided $(x,x)$-diagram over 
$\mathcal{P}$. We can classify each $T \in \Delta$ as either positive or negative, as follows:
a transistor is \emph{positive} if the top label is $x$ and the bottom label is $xax$; it is \emph{negative} in the opposite case. If a positive transistor of $\Delta$ is less than a negative transistor in the partial order on transistors, then $\Delta$ is necessarily not reduced. We can then remove dipoles until no positive transistor is less than a negative transistor. The diagram $\Delta$ can now be expressed in the form $\Delta = \Delta_{1} \circ \Delta_{2}^{-1}$, where all transistors in $\Delta_{1}$ and $\Delta_{2}$ are positive. We call such braided $(x, \ast)$-diagrams $\Delta_{1}$ and $\Delta_{2}$ \emph{positive}.

A given positive braided $(x, x^{n+1}a^{n})$-diagram $(n \geq 0)$, read from bottom to top, can be interpreted as a set of instructions for assembling an infinite binary tree (represented by ``$x$") out of $n+1$ infinite binary trees and $n$ vertices (represented by ``$a$"). Specifically, a positive transistor selects two infinite binary trees (two ``$x$"s) and a single vertex (an ``$a$") and combines them into a single binary tree (an ``$x$"). (Or: the transistor encodes the action of assembling these two trees and a vertex into a single binary tree.) The bottom left ``$x$" contact becomes the rooted subtree with root $0$ in the new tree; the bottom right ``$x$" contact becomes the rooted subtree with root $1$, and the vertex represented by ``$a$" becomes the root. Conversely, a negative transistor (read from bottom to top) represents a  dissection of a binary tree into three pieces.

Assume, without loss of generality, that $\Delta_{1}$ and $\Delta_{2}$ are braided $(x, x^{n+1}a^{n})$-diagrams. It follows from the above discussion that $\Delta_{2}^{-1}$ represents a dissection of the standard infinite binary tree into $n+1$ binary trees and $n$ vertices; $\Delta_{1}$ represents the subsequent reassembly of these pieces into a single binary tree. (Thus, $\Delta_{2}^{-1}$ is a dissection of the domain tree, and $\Delta_{1}$ shows how to reassemble the pieces into the range tree.) We let $f_{\Delta} \in QV$ denote the function so determined by the diagram $\Delta$.

The proof is completed by noting that
\begin{enumerate}
\item the function $f_{\Delta}$ indicated above is always in $QV$, and any element of $QV$ is
$f_{\Delta}$ for appropriate $\Delta$;
\item if $\Delta'$ and $\Delta''$ are braided $(x,x)$-diagrams over $\mathcal{P}$, and $\Delta'$ and $\Delta''$ are equivalent modulo dipoles, then $f_{\Delta'} = f_{\Delta''}$, and
\item the indicated correspondence is a homomorphism; i.e., $f_{\Delta' \circ \Delta''} = 
f_{\Delta'} \circ f_{\Delta''}$, for all braided $(x,x)$-diagrams $\Delta'$, $\Delta''$ over $\mathcal{P}$.
\end{enumerate}
\end{proof}

\begin{figure}[h]
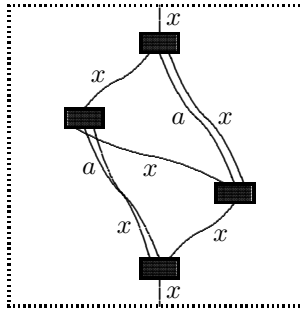
 
\vbox{\hspace{1cm}\beginpicture
\setcoordinatesystem units <.125 cm,.125cm> point at 0 0
\setplotarea x from -2 to 2, y from -2 to 2
\linethickness=.9pt


\setdots <2 pt >
\putrectangle corners at 16 16 and -16 -16

\setsolid
\setshadegrid span <.3pt>
\shaderectangleson

\putrectangle corners at 2 13 and -2 11 
\putrectangle corners at -10 5  and -6 3  
\putrectangle corners at 6 -3 and 10 -5 
\putrectangle corners at -2 -11 and 2 -13 


\plot 0 16    0 13 /
\plot 0 -16  0  -13 / 

\put {$x$} at 1.5 14.5 
\put {$x$} at 1.5 -14.5 
\put {$x$} at 7 4 
\put {$a$} at 2 4 
\put {$x$} at -6.5 8.5 
\put {$x$} at -1 -1.5 
\put {$x$} at 6.5 -8.5 
\put {$x$} at -3.75 -7.625 
\put {$a$} at -7.5 -1.375

\setquadratic
\plot 1 11    3 6.625     5 4 /
\plot 5 4     7 1.375     9 -3 / 
\plot 0 11    2 6.625     4 4 /
\plot 4 4     6 1.375     8 -3 / 
\plot 1 -11     2.75 -9.125     4.5 -8 /
\plot 4.5 -8     6.25 -6.875    8 -5 / 
\plot 7 -3    3 -1.125    -1 0 /
\plot -1 0    -5 1.125    -9 3 / 
\plot -1 11     -2.75 9.125    -4.5 8 /
\plot -4.5 8     -6.25 6.875    -8 5 / 
\plot -7 3    -5.5 -1.375     -4 -4 /
\plot -4 -4     -2.5 -6.625    -1 -11 / 
\plot 0 -11     -2 -6.625    -4 -4 /
\plot -4 -4    -6 -1.375   -8 3 / 

\endpicture}\hfil
\caption{The element $h \in QV$ from Figure \ref{figure:3}, described in diagram form.}
\label{figure:4}
\end{figure}


\begin{example} Figure \ref{figure:4} shows how the element $h$ of $QV$ from Figure \ref{figure:3} may be specified by a braided $(x,x)$-diagram over the presentation $\mathcal{P} = \langle a, x \mid xax = x \rangle$. Here, the negative transistors represent a dissection of the domain tree and the positive transistors represent a dissection of the range tree.

We read the diagram from the bottom up. The bottommost transistor represents the dissection of the basic binary tree $\mathcal{T}$ into three pieces: the binary tree rooted at ``$0$"
(represented by the leftmost wire at the top of the transistor; hereafter, we simply call this the $0$-tree), the root (represent by the ``$a$" wire), and $1$-tree (represented by the rightmost wire). We note that the rightmost wire leads to another negative transistor; this transistor represents the dissection of the subtree rooted at $1$ into three more pieces: the $10$-tree (represented by the left wire), the root $1$ (represented by the ``$a$" wire), and the $11$-tree (represented by the right wire). There are no more negative transistors, so the domain tree is dissected into five pieces (in left-to-right order): the $0$-tree, the root $\epsilon$, the $10$-tree, the vertex $1$, and the $11$-tree. 

In a similar way, the dissection of the range tree is encoded by the positive transistors, which are read from the top down. Thus,
the range tree is dissected into (respectively): the $00$-tree, the vertex $0$, the $01$-tree, the root $\epsilon$, and the $1$-tree.

The mapping between these pieces is determined by wire connections; in particular, we have that $h$ sends the $0$-tree to the $01$-tree; the root $\epsilon$ to $0$, the $10$-tree to the $00$-tree, the vertex $1$ to $\epsilon$, and the $11$-tree to the $1$-tree.
\end{example}

\subsection{Actions of $QF$, $QT$, and $QV$ on CAT($0$) cubical complexes} \label{subsection:CAT0cubical}

In this subsection, we will describe actions of $QF$, $QT$, and $QV$ on CAT($0$) cubical complexes. All of the actions under consideration will be proper and by isometries. We have already seen an action of $QV$ with the desired properties: indeed $QV \cong D_{b}(\mathcal{P},x)$ (where $\mathcal{P} = \langle a, x \mid x = xax \rangle$), so $QV$ acts properly and isometrically on the CAT($0$) cubical complex
$\widetilde{K}_{b}(\mathcal{P}, x)$. As subgroups of $QV$, $QF$ and $QT$ also act properly and isometrically on $\widetilde{K}_{b}(\mathcal{P}, x)$. We will, however, want $QF$ and $QT$ to act on more economical complexes when we attempt to establish the $F_{\infty}$ property for these groups. Here we will find such complexes as convex subcomplexes of $\widetilde{K}_{b}(\mathcal{P}, x)$.

\begin{proposition} (Diagram description of elements in $QF$ and $QT$)
Let $\Delta$ be a braided $(x,x)$-diagram over $\mathcal{P} = \langle a, x \mid x = xax \rangle$.
\begin{enumerate}
\item The diagram $\Delta$ represents an element of $QF$ if and only if the result of deleting each edge labelled ``$a$" results in a planar $(x,x)$-diagram over the presentation $\mathcal{P}' 
= \langle x \mid x = x^{2} \rangle$.
\item The diagram $\Delta$ represents an element of $QT$ if and only if the result of deleting each edge labelled ``$a$" results in an annular $(x,x)$-diagram over the presentation $\mathcal{P}'$. \qed
\end{enumerate}
\end{proposition}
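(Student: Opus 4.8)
The plan is to realize the operation ``delete every wire labelled $a$'' as the homomorphism $\pi\colon QV\to V$ of Proposition~\ref{proposition:QVpi}, transported across the isomorphisms $QV\cong D_{b}(\mathcal{P},x)$ and $V\cong D_{b}(\mathcal{P}',x)$, and then to invoke the classical identifications of the planar and annular diagram groups over $\mathcal{P}'$ with $F$ and $T$. First I would check that deletion is well behaved at the level of diagrams. Given a braided $(x,x)$-diagram $\Delta$ over $\mathcal{P}=\langle x,a\mid x=xax\rangle$, removing each ``$a$''-wire (and the two contacts at its ends) turns a positive transistor, with labels $(x,xax)$, into one with labels $(x,x^{2})$ and a negative transistor, with labels $(xax,x)$, into one with labels $(x^{2},x)$, while the frame, the disjointness of wires, and the strict partial order on transistors (which can only lose relations) are all inherited; so the result $\bar{\Delta}$ is a braided $(x,x)$-diagram over $\mathcal{P}'=\langle x\mid x=x^{2}\rangle$. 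Deletion visibly commutes with concatenation, and one checks from Definition~\ref{definition:dipoles} that a dipole of $\Delta$ is carried to a dipole of $\bar{\Delta}$ whose removal again agrees with deletion (the reconnection involves only $x$-labelled contacts). Hence $\Delta\mapsto\bar{\Delta}$ descends to a group homomorphism $\Phi\colon D_{b}(\mathcal{P},x)\to D_{b}(\mathcal{P}',x)$.

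The crux is to show $\Phi=\pi$ under the two isomorphisms above. Here I would use the reading of diagrams from Subsection~\ref{subsection:QVdiagramgroup}: a reduced $(x,x)$-diagram over $\mathcal{P}$ is $\Delta_{1}\circ\Delta_{2}^{-1}$ with $\Delta_{1},\Delta_{2}$ positive, its negative transistors encode a dissection of the domain tree and its positive transistors a reassembly into the range tree, the wire crossings record the leaf bijection $\sigma$, and the ``$a$''-wires record \emph{precisely} the bijection $f$ of interior nodes --- each ``$a$''-wire on a negative transistor tags the interior node of the domain tree produced by that transistor, each ``$a$''-wire on a positive transistor tags an interior node of the range tree, and the wire joining the two tagged contacts is the corresponding value of $f$. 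Deleting the ``$a$''-wires therefore destroys $f$ and leaves $(T_{1},\sigma,T_{2})$ intact; read over $\mathcal{P}'$, the diagram $\bar{\Delta}$ is exactly the braided diagram representing $(T_{1},\sigma,T_{2})\in V$. Since $\pi$ is projection onto the tree-pair coordinate, this gives $\Phi=\pi$, and hence $QF=\pi^{-1}(F)=\Phi^{-1}(F)$ and $QT=\Phi^{-1}(T)$ by Definition~\ref{definition:QFQT}.

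Finally I would convert membership in $F$ (respectively $T$) into a planarity (respectively annularity) statement about $\bar{\Delta}$. By Guba--Sapir \cite{GS}, the planar diagram group $D(\mathcal{P}',x)$ is isomorphic to $F$ and the annular diagram group $D_{a}(\mathcal{P}',x)$ to $T$, and under $D_{b}(\mathcal{P}',x)\cong V$ these sit inside as the subgroups consisting of the dipole-classes that admit a planar (respectively annular) representative. Inserting or removing a dipole preserves planarity and annularity --- the pair of wires involved bounds a cell-free bigon, and condition~(1) of Definition~\ref{definition:dipoles} forbids a twist --- and each dipole-class contains a unique reduced diagram (Remark~\ref{remark:reduced}); so the class of $\bar{\Delta}$ admits a planar (respectively annular) representative if and only if $r(\bar{\Delta})$ is planar (respectively annular), if and only if $\bar{\Delta}$ itself is. Combining this with $\Phi=\pi$ yields ``$\Delta$ represents an element of $QF$'' $\iff$ ``$\bar{\Delta}$ is planar'', and the same with $QF$ and ``planar'' replaced by $QT$ and ``annular''.

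I expect Step~2, the identification $\Phi=\pi$, to be the main obstacle: it is conceptually transparent but requires careful bookkeeping of the dissection/reassembly conventions, of which tree is the domain and which the range, and of how the ``$a$''-wires and the wire crossings jointly encode $((T_{1},\sigma,T_{2}),f)$; the external inputs in the last step ($D(\mathcal{P}',x)\cong F$ and $D_{a}(\mathcal{P}',x)\cong T$) are standard. The one technical point to keep in mind is that a reduced $\Delta$ over $\mathcal{P}$ need not yield a reduced $\bar{\Delta}$ over $\mathcal{P}'$ --- deleting ``$a$''-wires can create dipoles --- which is exactly why the planarity criterion must be phrased at the level of the dipole-class, and why the invariance of planarity and annularity under dipole moves is needed.
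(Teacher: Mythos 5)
The paper states this proposition with a \qed and offers no proof; your argument is precisely the one the surrounding text intends (deletion of the ``$a$''-wires realizes the homomorphism $\pi$ of Proposition~\ref{proposition:QVpi}, $QF=\pi^{-1}(F)$ and $QT=\pi^{-1}(T)$ by Definition~\ref{definition:QFQT}, and $F$, $T$ are the planar and annular diagram groups over $\langle x\mid x=x^{2}\rangle$ by \cite{GS}). In substance your proposal is correct and follows the same route.

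One claim in your final paragraph is false as stated, though it is not needed for the proposition. You assert that dipole insertion preserves planarity, and hence that $\bar{\Delta}$ is planar if and only if $r(\bar{\Delta})$ is. Condition~(1) of Definition~\ref{definition:dipoles} only constrains the wires \emph{between} the two transistors of the dipole; it says nothing about how the inserted pair is attached to the ambient diagram, and that attachment may be twisted. Concretely, over $\mathcal{P}'$ take the planar two-transistor diagram $\Delta_{0}$ (split $x$ into $x^{2}$, then merge), and insert a dipole on the bundle of two middle wires with the upper halves attached to the new upper transistor in reversed left-to-right order (and the lower halves reversed correspondingly, so that removing the inserted dipole recovers $\Delta_{0}$). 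The result is a non-planar braided diagram whose reduced form is the trivial, planar diagram. So only the first of your two biconditionals is correct, and it is the one your argument actually uses: the dipole-class of $\bar{\Delta}$ contains a planar (resp.\ annular) representative if and only if $r(\bar{\Delta})$ is planar (resp.\ annular). The same example, lifted to $\mathcal{P}$ by restoring ``$a$''-wires, shows that the proposition itself must be read at the level of dipole-classes --- exactly as the paper's later definition of $K_{QF}$ (``has a representative $\Delta$ such that $\pi(\Delta)$ is planar'') does --- since for a non-reduced $\Delta$ the literal ``only if'' direction can fail.
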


The above descriptions of $QF$ and $QT$ suggest the following definition:

\begin{definition}
(The complexes $K_{QF}$ and $K_{QT}$) Let $\mathcal{P} = \langle a, x \mid x = xax \rangle$. We denote the complex
$\widetilde{K}_{b}(\mathcal{P}, x)$ by $K_{QV}$.

For a braided $(x,\ast)$-diagram $\Delta$ over $\mathcal{P}$, let $\pi(\Delta)$ denote the diagram over
$\mathcal{P}' = \langle x \mid x = x^{2} \rangle$ obtained by deleting each wire labelled by ``$a$''. [We note that the map 
$\pi$ determines a homomorphism from $D_{b}(\mathcal{P}, x)$
to $D_{b}(\mathcal{P}', x) \cong V$, where $\mathcal{P}' = \langle x \mid x = x^{2} \rangle$. Indeed, this is the same homomorphism as the one in Proposition \ref{proposition:QVpi}.] 

We define subcomplexes $K_{QF}$, $K_{QT}$ of $K_{QV}$ to be the full subcomplexes of $K_{QV}$ determined by vertex 
sets $K_{QF}^{0}$ and $K_{QT}^{0}$, which are defined as follows: a vertex $v$ of $K_{QV}$ is in $K_{QF}^{0}$ (respectively, in $K_{QT}^{0}$) if and only if it has a representative $\Delta$ such that $\pi(\Delta)$ is planar (respectively, annular).
(Recall that a vertex is an equivalence class of braided diagrams (see Definition \ref{definition:verticesofK}), so a representative of $v$ is a choice of diagram from the equivalence class.)
\end{definition}

\begin{proposition} \label{proposition:CAT0cubical}
The complexes $K_{QF}$ and $K_{QT}$ are path-connected subcomplexes of $K_{QV}$ such that, for each 
$v \in K_{QF}^{0}$ (respectively, $K_{QT}^{0}$) the link of $v$ in $K_{QF}$ (respectively, $K_{QT}$) embeds in the link of $v$ in $K_{QV}$ as a full subcomplex. 

In particular, $K_{QF}$ and $K_{QT}$ are CAT($0$) cubical complexes. Moreover, $QF$ and $QT$ act properly and isometrically on $K_{QF}$ and $K_{QT}$. 
\end{proposition}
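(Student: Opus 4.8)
There are really three things to establish: that $K_{QF}$ and $K_{QT}$ are path-connected; that each vertex link embeds as a \emph{full} subcomplex of the corresponding link in $K_{QV}$; and the two ``in particular'' assertions. The last two are quick consequences of the first two: since $K_{QV}$ is a CAT($0$) cube complex (Proposition \ref{proposition:orbit}), a connected subcomplex in which every vertex link is a full subcomplex of the ambient link is convex, and hence itself a CAT($0$) cube complex, by \cite{CW}. For the group actions, $QF$ and $QT$ already act properly and isometrically on $K_{QV}$ as subgroups of $QV$; once we know that these actions preserve the subcomplexes $K_{QF}$ and $K_{QT}$ (see below), the restricted actions are automatically proper and by isometries. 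So the work is in connectedness, fullness of links, and invariance.

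\textbf{Connectedness.} I would show that every vertex of $K_{QF}$ (resp.\ $K_{QT}$) can be joined to the trivial vertex $[x]$ (the class of the single-wire diagram, which clearly lies in $K_{QF}^0 \cap K_{QT}^0$) by an edge path inside $K_{QF}$ (resp.\ $K_{QT}$). Argue by induction on the number of transistors of a representative $\Delta$ of $v \in K_{QF}^0$ with $\pi(\Delta)$ planar. If $\Delta$ has a transistor, choose one, say $T$, that is minimal in the partial order $<$; then every wire at the bottom of $T$ meets the bottom of the frame. Let $\Delta'$ be obtained by deleting $T$ and rerouting the wire(s) at the top of $T$ straight to the bottom of the frame. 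Then $\Delta'$ is reduced (deleting a minimal transistor creates no dipole), has one fewer transistor, and $\Delta = r(\Delta' \circ \tau)$ for the one-transistor diagram $\tau$ that reinserts $T$; so $v$ and $[\Delta']$ are joined by an edge of $K_{QV}$. Moreover $\pi(T)$ is a minimal transistor of the planar diagram $\pi(\Delta)$, and $\pi(\Delta') = \pi(\Delta) \setminus \pi(T)$, so $\pi(\Delta')$ is again planar and $[\Delta'] \in K_{QF}^0$; hence the edge lies in $K_{QF}$. Induction finishes the argument; the case of $K_{QT}$ is identical with ``annular'' in place of ``planar.''

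\textbf{Fullness of links.} Since $K_{QF}$ is by definition the full subcomplex of $K_{QV}$ on $K_{QF}^0$, for a vertex $v = [\Delta]$ the link $\mathrm{lk}(v,K_{QF})$ is the subcomplex of $\mathrm{lk}(v,K_{QV}) \cong \mathrm{lk}(\hat w)$ (Propositions \ref{proposition:linkdescription}, \ref{proposition:descriptionoflink}, where $\Delta\colon x \to \hat w$) consisting of those simplices all of whose cube-vertices lie in $K_{QF}^0$. A simplex of $\mathrm{lk}(\hat w)$ is a set $S = \{e_1,\dots,e_r\}$ of pairwise disjoint elementary relation-applications to $\hat w$, and its cube-vertices are the classes $[r(\Delta \circ \Psi_T)]$ for $T \subseteq S$. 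Thus fullness amounts to: if $[r(\Delta \circ \Psi_{\{e_i\}})] \in K_{QF}^0$ for each $i$, then $[r(\Delta \circ \Psi_S)] \in K_{QF}^0$ (apply this with each $T \subseteq S$ in place of $S$ to get all cube-vertices). Because $\pi$ is compatible with concatenation and with dipole reduction (it is the homomorphism of Proposition \ref{proposition:QVpi}), and because $v \in K_{QF}^0$ means $\pi(\Delta)$ is planar for a suitable representative, this is a statement purely about diagrams over $\mathcal{P}' = \langle x \mid x = x^2 \rangle$: the $\pi$-images of $e_1,\dots,e_r$ are elementary expansions/contractions applied at \emph{pairwise disjoint} sub-words of the bottom label of $\pi(\Delta)$, so $\pi(\Psi_S)$ is their ``parallel composite,'' and one must show that planarity of $r(\pi(\Delta)\circ\pi(\Psi_S))$ follows from planarity of each $r(\pi(\Delta)\circ\pi(\Psi_{\{e_i\}}))$. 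The point is that any crossing of the reduced combined diagram is already forced by the behaviour over a single block $e_i$, since the blocks do not interleave; making this precise requires a combinatorial criterion for when a reduced braided diagram over $\mathcal{P}'$ is planar (essentially: the induced leaf-matching is non-crossing after free reductions) and a check that this criterion is detected blockwise. For $K_{QT}$ one uses the analogous criterion for annularity (non-crossing up to a single cyclic rotation). \emph{I expect this step to be the main obstacle}: the induction and the CAT($0$) bookkeeping are routine, but pinning down the right local criterion for planarity/annularity of diagrams over $\langle x \mid x = x^2\rangle$, and verifying it passes to disjoint simultaneous expansions, is where the real content lies.

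\textbf{Invariance of the subcomplexes.} Finally, to see that the $QV$-action restricts, it suffices (by fullness) to check vertices: if $\widetilde{\Delta}$ represents an element of $QF$ (so $\pi(\widetilde{\Delta})$ is planar) and $v = [\Delta] \in K_{QF}^0$, then $\widetilde{\Delta}\cdot v = [r(\widetilde{\Delta}\circ\Delta)]$ and $\pi(r(\widetilde{\Delta}\circ\Delta)) = r(\pi(\widetilde{\Delta})\circ\pi(\Delta))$; the concatenation of a planar $(x,x)$-diagram with a planar $(x,\ast)$-diagram over $\mathcal{P}'$ is planar, and dipole reduction preserves planarity, so $\widetilde{\Delta}\cdot v \in K_{QF}^0$. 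The same argument with ``annular'' in place of ``planar'' shows $QT$ preserves $K_{QT}$. Combining this with the two previous paragraphs and \cite{CW} gives all the assertions of the proposition.
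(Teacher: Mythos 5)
Your proposal follows the same route as the paper: path-connectedness by induction on the number of transistors (clipping a minimal transistor), fullness of vertex links plus the convexity criterion of \cite{CW} to get the CAT($0$) property, and invariance/properness of the restricted actions via compatibility of $\pi$ with concatenation and dipole reduction. All of that matches.

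The one place you stop short is the step you yourself flag as ``the main obstacle'': showing that if each single elementary application $e_i$ keeps you in $K_{QF}^0$, then so does their parallel composite. You frame this as requiring a general combinatorial criterion for planarity of reduced diagrams over $\langle x \mid x = x^2\rangle$ and a blockwise verification, and you leave it open. In fact this closes much more quickly than you fear, and the reason is the explicit description of the link in Definition \ref{definition:links} and Remark \ref{remark:link}. An ascending one-transistor diagram $(m)$ always preserves planarity of $\pi(\Delta)$ (it just attaches an $(x,x^2)$-transistor at one position), so the only constraint comes from the descending vertices: a descending vertex $(m,n,p)$ keeps you in $K_{QF}^0$ precisely when $n = m+1$, i.e.\ the transistor grabs two \emph{consecutive} $x$-wires in the correct order. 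Now pairwise disjointness means the sets $\{m_i, m_i+1\}$ are pairwise disjoint, and disjoint pairs of consecutive integers cannot interleave; hence the union of the $\Psi_i$ is already planar after applying $\pi$, with no reduction subtleties to worry about. (For $K_{QT}$ the admissible descending vertices are the cyclically consecutive pairs $(m,m+1)$ and $(k,1)$, at most one of which can be the wrap-around pair in a disjoint family, so the union is annular.) This is exactly the content of the paper's remark that ``the union of the $\Psi_i$ is planar after the application of $\pi$,'' and with that observation supplied your argument is complete.
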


\begin{proof}
We first show that $K_{QF}$ and $K_{QT}$ are path-connected subcomplexes of $K_{QV}$. Since the proofs in both cases are similar, it will be sufficient to consider $K_{QF}$.

 Let $v$ be a vertex in $QF$. We will show that $v$ can be connected to the natural basepoint $v^{\ast}$ of $K_{QF}$, which is the equivalence class of the permutation $(x,x)$-diagram $\Delta'$. (Recall that a permutation diagram has no transistors (see Definition \ref{definition:verticesofK}). Thus, $\Delta'$ consists simply of a frame and a single wire, which runs from the top of the frame to the bottom.) We prove that $v$ can be connected to $v^{\ast}$ by an edge-path using induction on the number $n$ of transistors in a diagram representative $\Delta$ for $v$. This is trivial if $n=0$. If $n \geq 1$, then we pick a transistor $T$ of $\Delta$ that is minimal in the partial order on transistors. We let $\Delta_{1}$ denote the diagram that is obtained from deleting $T$ (and all depending wires) from $\Delta$, and we let $v_{1}$ denote the vertex represented by $\Delta_{1}$. Clearly,
$v_{1}$ and $v$ are adjacent in $K_{QF}$, and, by induction, $v_{1}$ can be connected to the basepoint $v^{\ast}$ by a path. It follows that $v$ can also be so connected to $v^{\ast}$, and it then follows that $K_{QF}$ is path-connected.

To show that $K_{QF}$ embeds as convex subspace of $K_{QV}$, it now suffices to prove that the link of an arbitrary vertex $v \in K_{QF}$ includes into the link of $v \in K_{QV}$
as a full subcomplex. (We are appealing to Theorem 1(2) from \cite{CW}.) Assume that $\Delta$ represents $v$, and the bottom label of $\Delta$ is
$x^{k}a^{\ell}$, for some integers $k$ and $\ell$, with $k > 0$. Let $\Psi_{1}$, $\Psi_{2}$, $\ldots$, $\Psi_{m}$ be $(x^{k}, a^{\ell}, \ast)$ diagrams over the semigroup presentation $\mathcal{P}$ be such that: i) each $\Psi_{i}$ has exactly one transistor; ii) the product $\Delta \circ \Psi_{i}$ determines a vertex of
$D(\mathcal{P}, x)$, possibly after reducing a dipole; iii) any two $\Psi_{i}$, $\Psi_{j}$ represent disjoint applications of relations to $x^{k}a^{\ell}$ (in the sense of Definition \ref{definition:disjointapp}). We note that i)-iii) are equivalent to the condition that the $\Psi_{i}$ are pairwise joined by edges in the link $\mathrm{lk}^{QF}(v)$. We are required to show that the collection of all of the $\Psi_{i}$ spans a single $(m-1)$-simplex in $\mathrm{lk}^{QF}(v)$. But the condition that the $\Psi_{i}$ are pairwise disjoint shows that
\[  \{ \Psi_{1}, \ldots, \Psi_{m} \} \]
is a simplex in $\mathrm{lk}^{QV}(v)$, and it is easy to see that the ``union" of the $\Psi_{i}$ (see Remark \ref{remark:link})
is planar after the application of $\pi$. This means that the link $\mathrm{lk}^{QF}(v)$ is a full subcomplex of $\mathrm{lk}^{QV}(v)$. It now follows that $K_{QF}$ is a convex subset of $K_{QV}$, and therefore CAT($0$).

Since the action of $QF$ on $K_{QF}$ is determined by stacking diagrams, and such stacking preserves planarity under the projection $\pi$, $QF$ acts on $K_{QF}$ by cell-permuting automorphisms, and thus by isometries. The properness of the action follows from the properness of the action of $QF$ on $K_{QV}$. 
\end{proof} 

\section{The $F_{\infty}$ Property} \label{section:Finfinity}

In this section, we will prove that each of the groups $QF$, $QT$, and $QV$ has type $F_{\infty}$. Our proof uses Ken Brown's well-known finiteness criterion , which we recall below in Subsection \ref{subsection:overview} (Theorem \ref{theorem:Brown}). In the remaining subsections we establish the various hypotheses of Theorem \ref{theorem:Brown}. The section ends in Subsection \ref{subsection:conclusion} with a proof that each of the groups $QF$, $QT$, and $QV$ has the $F_{\infty}$ property.

\subsection{Brown's Finiteness Criterion} \label{subsection:overview}

Our proof that $QF$, $QT$, and $QV$ have type $F_{\infty}$ uses a well-known theorem due to Ken Brown.

\begin{theorem}  \label{theorem:Brown} \cite{Brown}
(Brown's Finiteness Criterion.) Let $X$ be a CW-complex. Let $G$ be a group acting on $X$. If 
\begin{enumerate}
\item $X$ is contractible;
\item $G$ acts cellularly on $X$, and
\item there is a sequence of subcomplexes $X_1 \subseteq X_2 \subseteq ... \subseteq X_n \subseteq \ldots \subseteq X$ such that
\begin{enumerate}
\item $X = \bigcup_{n=1}^{\infty} X_{n}$;
\item $G$ acts cocompactly on $X_i$ and leaves each $X_{i}$ invariant;
\item $G$ acts with finite cell stabilizers, and
\item for every $k \geq 0$, there exists an $N$ such that $X_n$ is $k$-connected for every $n \geq N$
\end{enumerate}
\end{enumerate}
then $G$ is of type $F_\infty$. \qed
\end{theorem}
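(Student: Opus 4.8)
The plan is to reduce the statement, one dimension at a time, to the more elementary fact that a \emph{cocompact} cellular action on a sufficiently connected complex with well-behaved cell stabilizers forces a corresponding topological finiteness property; hypotheses (3)(a)--(3)(d) are arranged precisely so that this reduction succeeds. Since $G$ has type $F_\infty$ exactly when it has type $F_n$ for every $n$, it suffices to fix $n$ and produce a $K(G,1)$ with finite $n$-skeleton.

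The tool I would invoke is the standard cocompact criterion (as in Brown's \emph{Cohomology of Groups} or Geoghegan's \emph{Topological Methods in Group Theory}): if $G$ acts cellularly (after barycentric subdivision, without inversions) on an $(n-1)$-connected CW complex $Y$ that has only finitely many $G$-orbits of cells in each dimension $\le n$, and the stabilizer of every cell of dimension $\le n$ is itself of type $F_n$, then $G$ has type $F_n$. Two observations make this usable here: a cocompact action automatically has finitely many orbits of cells in \emph{every} dimension, because $Y/G$ is then a compact --- hence finite --- CW complex; and every finite group has type $F_\infty$, so the finite cell stabilizers supplied by hypothesis (3)(c) are of type $F_n$ for all $n$.

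Granting that, the deduction is short. Fix $n$; by hypothesis (3)(d) (applied with $k = n$) choose $N$ so that $X_N$ is $n$-connected, in particular $(n-1)$-connected. By (3)(b), $G$ acts cocompactly on $X_N$ and leaves it invariant, and by (3)(c) its cell stabilizers are finite, so the cocompact criterion applied to $Y = X_N$ gives that $G$ has type $F_n$; as $n$ was arbitrary, $G$ has type $F_\infty$. Note that hypothesis (1) is not actually consumed by this argument --- it follows from (3)(a) and (3)(d), since any compact subset of $X = \bigcup_m X_m$ lies in some $X_m$, whence $\pi_j(X) = \operatorname{colim}_m \pi_j(X_m) = 0$ for all $j$ --- but checking contractibility of $X$ directly is the natural first step in practice, and it is what lets one treat $X$ as a (generally non-cocompact) resolution.

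The genuine content, and the step I expect to be the main obstacle, is the cocompact criterion itself. Its homological core runs as follows: the augmented cellular chain complex of $Y$, viewed over $\mathbb{Z}G$, is exact in degrees below $n$ because $Y$ is $(n-1)$-connected, and in each degree $i \le n$ it is a finite direct sum $\bigoplus_\sigma \mathbb{Z}[G/G_\sigma]$ over orbit representatives $\sigma$; each $G_\sigma$ is finite, hence of type $FP_\infty$ over $\mathbb{Z}$, so each summand $\mathbb{Z}[G/G_\sigma]$ has type $FP_n$ over $\mathbb{Z}G$, and splicing these facts shows $G$ has type $FP_n$. Upgrading $FP_n$ to the topological $F_n$ additionally needs $G$ to be finitely presented --- which is read off from the $1$- and $2$-cells of the cocompact, simply connected $X_N$ --- together with the standard equivalence $F_n \iff (F_2 \text{ and } FP_n)$ for $n \ge 2$. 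The delicate points are keeping the equivariant bookkeeping of orbits and stabilizers honest through the splicing and handling the passage from homological to topological finiteness; since all of this is entirely general and independent of the diagram-group setting, it is reasonable to cite \cite{Brown} rather than reproduce it.
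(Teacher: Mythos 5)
The paper offers no proof of this statement: it is quoted from \cite{Brown} and closed with a \verb|\qed|, so there is nothing internal to compare against. Your proposal is correct, and it is worth recording how it relates to Brown's own treatment. You reduce everything to the cocompact criterion (a cellular action, without inversions after barycentric subdivision, on an $(n-1)$-connected complex with finitely many orbits of cells and stabilizers of type $F_n$ in dimensions $\le n$ implies $G$ is of type $F_n$) applied to a single, well-chosen term $X_N$ of the filtration; this is legitimate and is in fact the most direct derivation of the theorem \emph{as stated}, since hypothesis (3)(d) hands you an $n$-connected cocompact $X_N$ outright. Brown's paper proves a sharper statement --- type $FP_n$ (resp.\ $F_n$) is \emph{equivalent} to the directed systems $\{\widetilde{H}_k(X_\alpha)\}$ (resp.\ $\pi_k$) being essentially trivial for $k<n$ --- of which the version above is the easy sufficient-condition corollary; the extra generality matters only when the individual $X_\alpha$ never become highly connected but the bonding maps eventually kill homotopy, which is not the situation here. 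Your subsidiary observations are also sound: cocompactness forces finitely many orbits of cells in every dimension, finite stabilizers are of type $F_\infty$, and hypothesis (1) is redundant given (3)(a) and (3)(d) by the colimit argument plus Whitehead's theorem. The one point to keep explicit in a full write-up is the passage to the barycentric subdivision so that stabilizers fix cells pointwise (otherwise the cellular chain groups are sums of modules $\mathbb{Z}[G/G_\sigma]$ twisted by orientation characters rather than honest permutation modules); you flag this, so the proposal stands as a correct blueprint.
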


We note that $QF$, $QT$, and $QV$ each act cellularly on contractible CW-complexes by the results of Subsection \ref{subsection:CAT0cubical}. It therefore remains to introduce suitable filtrations $\{ X_{i} \mid i \in \mathbb{N} \}$ that satisfy (3).  

\subsection{Filtrations by subcomplexes} \label{subsection:filtration}

\begin{definition} If $\Sigma$ is an alphabet, $p \in \Sigma$, and $w \in \Sigma^{+}$, then we let $|w|_{p}$ denote the number of occurrences of the letter $p$ in $w$. 
\end{definition}

\begin{definition} \label{definition:filtration}
Let $X = K_{QF}$, $K_{QT}$, or $K_{QV}$. For $n \geq 1$, let $L_{n}$ denote the collection of words $w$ in the alphabet $\{ a, x \}$ such that
$|w|_{a} = j-1$ and $|w|_{x} = j$, for some 
$j \in  \{ 1, \ldots, n \}$.

 We let $X_{n}$ denote the subcomplex generated by the collection of all vertices of $X$ having labels in $L_{n}$.
Thus, $X_{n}^{0}$ consists of all vertices having labels in $L_{n}$, and a higher-dimensional cube $C$ of $X$ lies in $X_{n}$ if and only if the label of each corner of $C$ lies in $L_{n}$.
\end{definition}  

\begin{proposition} \label{proposition:filtration}
If $X = K_{QF}$, $K_{QT}$, or $K_{QV}$, then $\{ X_{n} \mid n \in \mathbb{N} \}$ is a filtration of $X$ by subcomplexes that satisfies 3a), b), and c) from Theorem \ref{theorem:Brown}, where $G$ is $QF$, $QT$, or $QV$ (respectively).
\end{proposition}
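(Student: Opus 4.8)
The goal is to verify conditions 3a, 3b, and 3c of Brown's Criterion for the filtration $\{X_n\}$. Condition 3a is the assertion that $X = \bigcup_n X_n$, which is essentially immediate: every vertex of $X$ has a reduced representative $\Delta$ whose bottom label is some word $w \in \{a,x\}^+$, and by the relation $x = xax$ the bottom label of a positive-reduced representative of a $(w,\ast)$-diagram over $\mathcal{P}$ has the form $x^j a^{j-1}$ for some $j \geq 1$ — so $w$ (up to permutation) has $|w|_x = |w|_a + 1$ and hence lies in $L_n$ for all large $n$; since each cube has only finitely many corners, each cube eventually lies in some $X_n$. I would spell this out carefully, as the shape of the bottom label is exactly what makes the filtration sensible. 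Condition 3c (finite cell stabilizers) is inherited verbatim from the action of the ambient group on $K_{QV} = \widetilde{K}_b(\mathcal{P},x)$, where properness (hence finite stabilizers) is given by Proposition \ref{proposition:orbit}, together with the fact that $QF$ and $QT$ act on $K_{QF}$, $K_{QT}$ as subgroups/restrictions; I would note this in one sentence.

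\textbf{The main work is 3b.} First, invariance: the action is $\widetilde{\Delta}\cdot(\Delta,\Psi) = (r(\widetilde{\Delta}\circ\Delta),\Psi)$, and left-concatenation by an element of $QF$ (resp.\ $QT$, $QV$) does not change the bottom label of $\Delta$ except up to a permutation — and $L_n$ is permutation-closed — so the label of every corner of a cube is preserved as a member of $L_n$; hence $G$ leaves each $X_n$ invariant. Second, cocompactness of the action on $X_n$: I would argue that $X_n$ has finitely many $G$-orbits of cells. By Proposition \ref{proposition:orbit}, two vertices of $K_{QV}$ lie in the same $QV$-orbit iff their bottom labels are permutations of each other; since $L_n$ contains only finitely many words up to permutation (namely the classes of $x^j a^{j-1}$, $1 \le j \le n$), there are finitely many vertex orbits in $X_n$ under $QV$, and the same bound controls $QF$ and $QT$ because $\pi$ surjects onto $F$, $T$ and the relevant tree-pair data is finite in each bottom-label class. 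For higher cells, a cube $(\Delta,\Psi)$ in $X_n$ is determined up to the $G$-action by the equivalence class of $\Delta$ modulo left-permutation together with the thin diagram $\Psi$; once the bottom label of $\Delta$ is fixed in one of the finitely many permutation classes, the number of thin $\Psi$'s attaching to it that keep all corners in $L_n$ is finite (a thin diagram over $\mathcal{P}$ on a word of bounded length has boundedly many transistors). Thus $X_n$ has finitely many cells mod $G$, i.e.\ the action is cocompact.

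\textbf{Expected obstacle.} The delicate point is making the cocompactness count genuinely uniform across the three groups simultaneously, and in particular handling $QF$ and $QT$ correctly: their cells are cells of the \emph{full} subcomplexes $K_{QF}$, $K_{QT}$ cut out by the planarity/annularity condition on $\pi(\Delta)$, so I must check that restricting to these subcomplexes does not destroy the finite-orbit count — it does not, because a full subcomplex of a complex with finitely many $G$-orbits of cells, when $G$ preserves the subcomplex, still has finitely many orbits. I would also be careful that the ``orbit = permutation of bottom labels'' statement for $QV$ descends to the correct statement for $QF$ and $QT$ (here one uses that $F$ and $T$ act transitively on the relevant sets of reduced tree-pair carriers for a fixed leaf count, so the extra rigidity does not create new orbits within a bottom-label class). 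Modulo these bookkeeping checks, conditions 3a--3c follow; condition 3d (the high connectivity of $X_n$) is the substantive topological input and is deferred to the subsequent subsections, as the statement of this proposition makes explicit.
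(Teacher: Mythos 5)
Your argument is correct and follows essentially the same route as the paper's proof: 3a) via the observation that every bottom label is $x^j a^{j-1}$ up to permutation, 3b) via the orbit characterization of Proposition \ref{proposition:orbit} together with finiteness of links (equivalently, of the thin diagrams attachable to a fixed bottom label), and 3c) by inheritance from the proper action on $K_{QV}$. The only difference is one of detail: the paper dispatches the $K_{QF}$ and $K_{QT}$ cases with ``the proofs are similar,'' whereas you correctly flag and resolve the one point that actually needs checking there, namely that the ``same bottom label up to permutation'' orbit criterion still yields finitely many $QF$- and $QT$-orbits on the respective full subcomplexes.
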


\begin{proof}
We prove the proposition for $X = K_{QV}$. The proofs in the other two cases are similar.

We first prove that property 3a) holds. Let $C$ be any cube in $K_{QV}$. Each corner $v$ of the cube may be represented by a braided $(x, \ast)$-diagram $\Delta$. It is easy to see (by induction on the number of transistors in $\Delta$) that the bottom label of
$\Delta$ is $x^{j}a^{j-1}$, up to permutation of the letters, for some $j \geq 1$. Since $C$ has a finite number of corners, there is some particular $k \geq 1$ such that the bottom label of each corner lies in $L_{k}$. It follows that all corners of $C$ lie in $X_{k}$,
so that $C \subseteq X_{k}$. Thus, any cube of $X$ is contained in some 
$X_{k}$, proving 3a).

Next we establish property 3b). Recall that two vertices of $K_{QV}$ are in the same orbit if and only if they may be represented by diagrams with the same bottom labels (Proposition \ref{proposition:orbit}). It follows that each $X_{k}$ is invariant under the action of $QV$, and that there are only finitely many vertices of $X_{k}$ modulo the action of $QV$. Since the link of each vertex in $QV$ is finite (Proposition \ref{proposition:linkdescription} and 
Definition \ref{definition:links}), the action of $QV$ on each $X_{k}$ is cocompact. This proves 3b). 

The group $QV$ acts on each $X_{k}$ with finite cell stabilizers since $QV$ acts on $K_{QV}$ itself with finite cell stabilizers. This proves 3c).
\end{proof}

\subsection{Analysis of descending links} \label{subsection:analysis}

In this subsection, we consider the descending links in the complexes $K_{QF}$, $K_{QT}$, and $K_{QV}$. We first demonstrate (in \ref{subsubsection:connectivity}) that the connectivity of the complexes $X_{n}$ is determined by the connectivity of the descending links $\mathrm{lk}_{\downarrow}(w)$, where $w \in L_{n}$. We then compute the connectivity of $\mathrm{lk}_{\downarrow}(w)$ as a function of $n$ in \ref{subsubsection:links}.

\subsubsection{The descending link and connectivity of the complexes $X_{n}$} 
\label{subsubsection:connectivity}
Fix $n \geq 2$. Let $X$ denote any of the complexes $K_{QF}$, $K_{QT}$, or $K_{QV}$. (The current discussion applies equally to all three complexes, with inessential differences, so we will assume that $X = K_{QV}$ for simplicity.) For a small $\epsilon > 0$, consider the union of all $\epsilon$-neighborhoods around the vertices $v \in X_{n}$ having bottom labels $x^{n}a^{n-1}$. Let $A$ denote the complement of this union, and let $B$ denote its closure. Clearly, $X_{n} = A \cup B$.

Since a vertex $v \in X_{n}$ with bottom label $x^{n}a^{n-1}$ is adjacent in $X_{n}$ only to vertices with the bottom label $x^{n-1}a^{n-2}$, the link of $v$ in $X_{n}$ is isomorphic to the descending link
$\mathrm{lk}_{\downarrow}(v)$, as described in Definition \ref{definition:descendinglinks}. It follows that $A \cap B$ may be identified with a countable disjoint union of the links $\mathrm{lk}_{\downarrow}(v)$, as $v$ runs over all vertices in $X_{n}$ with bottom label $x^{n}a^{n-1}$. Moreover, the subspace $A$ strong deformation retracts onto the complex $X_{n-1}$.  It follows that, up to homotopy, we can describe $X_{n}$ as follows:
\[ X_{n} = X_{n-1} \bigcup_{\coprod_{v} \mathrm{lk}_{\downarrow}(v)} \left(\coprod_{v} C_{v} \right), \]
where the disjoint unions are over all vertices $v$ having bottom label $x^{n}a^{n-1}$, and the spaces $C_{v}$ are cones on the descending links, and therefore contractible.

Standard arguments using van Kampen's theorem, the Mayer-Vietoris sequence, and the Hurewicz theorem yield the following result.

\begin{proposition} \label{proposition:standard}
If the descending link $\mathrm{lk}_{\downarrow}(x^{k}a^{k-1})$ is $n$-connected ($n \geq 0$), then the inclusion map
$X_{k-1} \hookrightarrow X_{k}$ induces isomorphisms
$\pi_{j}(X_{k-1}) \rightarrow \pi_{j}(X_{k})$, for $0 \leq j \leq n$. 

In particular, if $\mathrm{lk}_{\downarrow}(x^{k}a^{k-1})$ is $n$-connected for all $k \geq N$ (for some $N \in \mathbb{N}$), then $X_{m}$ is $n$-connected, for all $m \geq N$.
\qed 
\end{proposition}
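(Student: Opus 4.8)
\textbf{Proof proposal for Proposition \ref{proposition:standard}.}

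The plan is to carry out the standard ``Morse-theoretic'' induction, using the decomposition $X_{k} = A \cup B$ established immediately above, where $A$ strong deformation retracts onto $X_{k-1}$, the intersection $A \cap B$ is a disjoint union $\coprod_{v} \mathrm{lk}_{\downarrow}(v)$ over the vertices $v$ of $X_{k}$ with bottom label $x^{k}a^{k-1}$, and $B$ deformation retracts onto $\coprod_{v} C_{v}$, a disjoint union of cones (hence each component of $B$ is contractible). The homotopy pushout description
\[ X_{k} \simeq X_{k-1} \bigcup_{\coprod_{v} \mathrm{lk}_{\downarrow}(v)} \left( \coprod_{v} C_{v} \right) \]
reduces everything to gluing cones along the descending links. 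The key point to exploit is that, by Proposition \ref{proposition:descriptionoflink} and Definition \ref{definition:descendinglinks}, the isomorphism type of $\mathrm{lk}_{\downarrow}(v)$ depends only on the bottom label of $v$, so all the descending links appearing here are copies of the single complex $\mathrm{lk}_{\downarrow}(x^{k}a^{k-1})$.

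First I would handle connectivity in degrees $j \geq 1$ via Mayer--Vietoris and van Kampen. Assume $\mathrm{lk}_{\downarrow}(x^{k}a^{k-1})$ is $n$-connected. For $\pi_{1}$: since $B$ and $A \cap B$ are (componentwise) path-connected and $A \cap B \to B$ is $\pi_{1}$-surjective (indeed an isomorphism onto the trivial group when $n \geq 1$, and in any case the image in $\pi_{1}(B)$ is trivial since $B$ is simply connected once $n\ge 1$), the van Kampen theorem gives $\pi_{1}(X_{k-1}) \xrightarrow{\cong} \pi_{1}(X_{k})$ when $n \geq 1$; when $n = 0$ one still gets surjectivity, which combined with the $H_1$ statement below gives what is claimed in degree $\le n=0$ (namely path-connectedness, i.e.\ $\pi_0$-isomorphism). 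For the higher homotopy groups, once we know $X_{k-1} \to X_{k}$ is a $\pi_1$-isomorphism I would pass to universal covers; the preimage of $B$ is a disjoint union of cones over (copies of the universal covers of, but these are already simply connected hence equal to) the descending links, and the relative Mayer--Vietoris / excision argument reduces the computation of $H_{j}(X_{k}, X_{k-1})$ for $j \leq n+1$ to $\widetilde{H}_{j-1}$ of a disjoint union of copies of $\mathrm{lk}_{\downarrow}(x^{k}a^{k-1})$, all of which vanish by $n$-connectivity. The Hurewicz theorem (applied to the simply connected pair, or rather to the universal cover) then upgrades this homological statement to $\pi_{j}(X_{k-1}) \xrightarrow{\cong} \pi_{j}(X_{k})$ for $1 \leq j \leq n$ (and surjectivity on $\pi_{n+1}$, which we do not need).

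The second, ``in particular'' statement is then a trivial induction: $X_{N}$ is $n$-connected because each $X_{m}$ with $m \ge N$ has the same homotopy groups through dimension $n$ by the first part --- but we must also anchor the induction, i.e.\ know that $X_m$ is $n$-connected for \emph{some} $m$. This is where one uses that $X = \bigcup_m X_m$ is contractible (Proposition \ref{proposition:orbit}), together with the fact that any map from a sphere $S^{j}$ (or a disk relative to its boundary) into $X$ has compact image, hence factors through some $X_m$ by property 3a) of Proposition \ref{proposition:filtration}: given $j \le n$ and a map $S^j \to X_M$, it is null-homotopic in $X$, the null-homotopy lands in some $X_{M'}$ with $M' \ge M \ge N$, and then by the first part the inclusion $X_M \hookrightarrow X_{M'}$ is a $\pi_j$-isomorphism, forcing the original map to be null-homotopic already in $X_M$; letting $M \to \infty$ and using that every homotopy class in $X_m$ is detected this way shows $\pi_j(X_m) = 0$ for all $m \ge N$ and all $j \le n$. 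I expect the main obstacle to be purely bookkeeping: being careful about the $\pi_1$ issues (so that Hurewicz applies) and about the direction of the maps when chasing the Mayer--Vietoris sequence of the pair, rather than any substantive difficulty --- the geometric input (the pushout decomposition and the label-dependence of descending links) has already been assembled, so what remains is the routine ``Brown's method'' homotopy-theoretic argument.
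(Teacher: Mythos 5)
Your proposal is correct and follows essentially the same route as the paper: the paper's entire justification is the one-line appeal to ``standard arguments using van Kampen's theorem, the Mayer--Vietoris sequence, and the Hurewicz theorem'' applied to the pushout decomposition $X_{k} \simeq X_{k-1} \cup_{\coprod_{v}\mathrm{lk}_{\downarrow}(v)} \left(\coprod_{v} C_{v}\right)$ set up just before the statement, and you have simply filled in those standard steps (including the correct anchoring of the ``in particular'' clause via contractibility of $X$ and compactness of spheres and homotopies). No gaps.
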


For future reference, we now give a combinatorial description of the descending links.

\begin{definition} \label{definition:QFQTlinks}
Let $v = x^{k}a^{\ell}$. We let $\mathrm{lk}_{\downarrow}^{QV}(v) = 
\mathrm{lk}_{\downarrow}(v)$ (as defined in Definition \ref{definition:descendinglinks}). 
We also let $\mathrm{lk}_{\downarrow}^{QF}(v)$ and $\mathrm{lk}_{\downarrow}^{QT}(v)$ denote the subcomplexes of $\mathrm{lk}_{\downarrow}^{QV}(v)$ spanned by the vertex sets
\[ \{ (m, m+1, p) \mid m \in \{ 1, \ldots, k-1 \}; p \in \{1, \ldots, \ell \} \}, \]
and
\[ \{ (m, m+1, p) \mid m \in \{ 1, \ldots, k-1 \}; p \in \{1, \ldots, \ell \} \} \, \bigcup \, \{ (k, 1, p) \mid
p \in \{1, \ldots, \ell \} \},\]
respectively.
\end{definition}

\begin{proposition}
Let $v = x^{k}a^{\ell}$. 
The complexes $\mathrm{lk}_{\downarrow}^{QF}(v)$ and $\mathrm{lk}_{\downarrow}^{QT}(v)$ are isomorphic to the descending links of the word $v$ in the subcomplexes $D(\mathcal{P},v)$ and
$D_{a}(\mathcal{P},v)$ of $D_{b}(\mathcal{P},v)$, respectively.
\end{proposition}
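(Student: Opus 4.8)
The plan is to reduce the statement to a purely combinatorial question: which vertices of $\mathrm{lk}_{\downarrow}^{QV}(v) = \mathrm{lk}_{\downarrow}(v)$ survive when one passes from braided diagrams to the planar‑type, respectively annular‑type, setting underlying $D(\mathcal{P},v)$ and $D_{a}(\mathcal{P},v)$. Here $D(\mathcal{P},v)$ and $D_{a}(\mathcal{P},v)$ denote the full subcomplexes of $D_{b}(\mathcal{P},v)$ cut out, exactly as $K_{QF}$ and $K_{QT}$ are cut out of $K_{QV}$, by the vertices admitting a representative $\Delta$ for which $\pi(\Delta)$ — the diagram over $\mathcal{P}' = \langle x \mid x = x^{2}\rangle$ obtained by deleting the $a$‑labelled wires — is planar, respectively annular. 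First I would combine Proposition \ref{proposition:descriptionoflink} with the reasoning that identifies links in $K_{QF}$ and $K_{QT}$ (Proposition \ref{proposition:CAT0cubical}), applied with base word $v$ in place of $x$: this identifies the descending link of $v$ in $D(\mathcal{P},v)$ (respectively $D_{a}(\mathcal{P},v)$) with the \emph{full} subcomplex of $\mathrm{lk}_{\downarrow}(v)$ spanned by those $(m,n,p) \in D_{v}$ whose associated one‑transistor $(v,\ast)$‑diagram $\Psi_{(m,n,p)}$ (see Remark \ref{remark:link}) has planar (respectively annular) $\pi$‑image.

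It then remains to determine the surviving vertices. The diagram $\pi(\Psi_{(m,n,p)})$ is a one‑transistor diagram over $\mathcal{P}'$ with top label $x^{k}$, whose single $(x^{2},x)$‑transistor is attached to the $m$th top $x$‑contact (via its left contact) and the $n$th top $x$‑contact (via its right contact), every other $x$‑wire running monotonically to the bottom of the frame. A short embedding argument shows $\pi(\Psi_{(m,n,p)})$ is planar if and only if $n = m+1$: if $m > n$ the two wires into the transistor must cross, and if $n > m+1$ the $x$‑wire at the $(m+1)$st top contact is trapped inside the disk bounded by those two wires together with the transistor's top edge, so it cannot reach the bottom of the frame without a crossing; conversely $\Psi_{(m,m+1,p)}$ is visibly planar after $\pi$. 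The annular case runs the same way, except that the trapped $x$‑wire may now escape around the annulus, precisely when the empty boundary arc is the one containing the basepoint; this contributes exactly one further family of vertices, namely $(k,1,p)$. Hence the surviving vertex set is $\{(m,m+1,p) \mid 1 \le m \le k-1,\ 1 \le p \le \ell\}$ for $D(\mathcal{P},v)$ and its union with $\{(k,1,p) \mid 1 \le p \le \ell\}$ for $D_{a}(\mathcal{P},v)$ — precisely the vertex sets defining $\mathrm{lk}_{\downarrow}^{QF}(v)$ and $\mathrm{lk}_{\downarrow}^{QT}(v)$ in Definition \ref{definition:QFQTlinks}.

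Finally I would check that the simplex conditions agree. Since the descending links in $D(\mathcal{P},v)$ and $D_{a}(\mathcal{P},v)$ are full subcomplexes of $\mathrm{lk}_{\downarrow}(v)$, a family of surviving vertices spans a simplex there exactly when it does in $\mathrm{lk}_{\downarrow}(v)$ — equivalently, by Definition \ref{definition:disjointapp} and Remark \ref{remark:link}, when there are no repetitions among the first two coordinates and none among the third, which is the simplex condition of Definition \ref{definition:links}. One must also note the easy point that such a pairwise‑disjoint family of planar‑type (respectively annular‑type) one‑transistor diagrams has planar‑type (respectively annular‑type) union, so that the simplex really does lie in $D(\mathcal{P},v)$ (respectively $D_{a}(\mathcal{P},v)$): the pairs $\{m_{i},m_{i}+1\}$ — and the arc $\{k,1\}$, if it occurs — are mutually disjoint, so all the transistors can be laid out simultaneously along $\{1,\dots,k\}$ (respectively around the circle), the deleted $a$‑wires being irrelevant to $\pi$. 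This yields the claimed simplicial isomorphisms.

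The step I expect to be the main obstacle — or rather, the only step requiring genuine content rather than bookkeeping — is the planarity/annularity dichotomy in the middle paragraph: that $\pi(\Psi_{(m,n,p)})$ is planar exactly for $n=m+1$ and annular exactly for $n=m+1$ or $(m,n)=(k,1)$. This is an elementary but honest topological argument about orientation‑preserving planar (respectively annular) embeddings, and it is where the asymmetry between $QF$ and $QT$ — the extra wrap‑around vertices $(k,1,p)$ — actually enters; care is needed to handle the "trapped wire" configuration and, in the annular case, the role of the basepoint correctly.
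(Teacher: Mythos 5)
Your proposal is correct and follows essentially the same route as the paper, which simply cites Proposition \ref{proposition:descriptionoflink} together with the embedding of $K_{QF}$ and $K_{QT}$ into $K_{QV}$ from Proposition \ref{proposition:CAT0cubical} and treats the rest as routine. You have merely written out the details the paper leaves implicit — in particular the planarity/annularity dichotomy ($n=m+1$, plus the wrap-around $(k,1,p)$ in the annular case), which is exactly what the vertex sets in Definition \ref{definition:QFQTlinks} encode.
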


\begin{proof}
This follows from the description of $\mathrm{lk}(v)$ from Proposition \ref{proposition:descriptionoflink}, and from the description of the embedding of the complexes for $QF$ and $QT$ into the complex for $QV$ (Proposition \ref{proposition:CAT0cubical}).
\end{proof}

\begin{remark}
As defined in Definition \ref{definition:QFQTlinks}, the vertices of $\mathrm{lk}^{QF}(v)$ and $\mathrm{lk}^{QT}(v)$ are $3$-tuples of integers. We will sometimes think of the final coordinate as a ``color" coordinate, since it has greater freedom of movement than the first two coordinates (which are the ``$x$''-coordinates, and therefore constrained by the requirements that the associated diagrams be planar or annular). We will especially use the color language in the proof of Proposition \ref{proposition:linkQFQT}.
\end{remark}

\subsubsection{The connectivity of the descending links in $K_{QF}$ and $K_{QT}$} \label{subsubsection:links}

We will determine the connectivity of the links $\mathrm{lk}_{\downarrow}^{QF}(v)$ and 
$\mathrm{lk}_{\downarrow}^{QT}(v)$ (for $v = x^{k}a^{\ell}$) with the aid of covers by subcomplexes. The Nerve theorem will be an important tool for us.

\begin{theorem} \cite{Bjorner}
(The Nerve Theorem) Let $\Delta$ be a simplicial
complex and $(\Delta_{i})_{i \in I}$ a family of
subcomplexes such that $\Delta = \bigcup_{i \in I} \Delta_{i}$. If every non-empty finite intersection
$\Delta_{i_{1}} \cap \ldots \cap \Delta_{i_{t}}$
is $(k-t+1)$-connected, then $\Delta$ is $k$-connected if and only if the nerve $\mathcal{N}(\Delta_{i})$ is $k$-connected. \qed
\end{theorem}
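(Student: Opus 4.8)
This is a classical result, and I would follow the homotopy-colimit proof that is standard in this area and underlies the reference \cite{Bjorner}. Write $\mathcal{N}=\mathcal{N}((\Delta_i)_{i\in I})$, and for a nonempty finite simplex $\sigma=\{i_1,\dots,i_t\}\in\mathcal{N}$ put $\Delta_\sigma=\Delta_{i_1}\cap\cdots\cap\Delta_{i_t}$. As $\sigma$ grows the subcomplexes $\Delta_\sigma$ shrink, and the inclusions $\Delta_\tau\hookrightarrow\Delta_\sigma$ (for $\sigma\subseteq\tau$) assemble into a diagram $D$ indexed by the face poset of $\mathcal{N}$. The plan is to compare $\Delta$ and $\mathcal{N}$ through the homotopy colimit $M=\operatorname{hocolim}D$, which carries two canonical maps: a projection $p\colon M\to\operatorname{colim}D=\bigcup_i\Delta_i=\Delta$, and a projection $q\colon M\to|\mathcal{N}|=\mathcal{N}$ obtained by collapsing each $\Delta_\sigma$ to its simplex.

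The argument then has three steps. First, because the $\Delta_i$ are \emph{subcomplexes} and the $\Delta_\sigma$ are their honest intersections, the diagram $D$ is cofibrant (all its maps are inclusions of subcomplexes, and the relevant squares are pushouts along cofibrations); hence $p\colon M\to\Delta$ is a homotopy equivalence, with no connectivity hypothesis needed. Second — and this is where the hypotheses enter — I would invoke the \emph{Projection Lemma} for homotopy colimits: if every value $D(\sigma)=\Delta_\sigma$ is $(k-\dim\sigma)$-connected, then $q\colon M\to\mathcal{N}$ induces a bijection on $\pi_0$, isomorphisms on $\pi_j$ for $1\le j\le k$, and a surjection on $\pi_{k+1}$; that is, $q$ is a $(k+1)$-equivalence. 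The numerology matches the hypothesis exactly: a $t$-fold intersection is $\Delta_\sigma$ for a simplex $\sigma$ with $|\sigma|=t$, so $\dim\sigma=t-1$ and $k-\dim\sigma=k-t+1$, which is precisely the assumed connectivity. Third, combining $\Delta\simeq M$ with the fact that $q$ is a $(k+1)$-equivalence yields $\pi_j(\Delta)\cong\pi_j(\mathcal{N})$ for all $j\le k$; since a nonempty complex is $k$-connected exactly when it is path-connected and its homotopy groups vanish through dimension $k$, this gives the desired biconditional. For infinite $I$ one first reduces to the finite case, since homotopy groups are detected on finite subcomplexes and commute with the filtered colimits along the cofibrations in play, so $k$-connectivity of both $\Delta$ and $\mathcal{N}$ can be checked on the pieces supported by finite subfamilies.

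The main obstacle is the Projection Lemma of the second step, i.e.\ controlling the connectivity of $q$ from the connectivities of the fibers $\Delta_\sigma$. I would prove it by building $M$ over $\mathcal{N}$ one simplex at a time, in order of increasing dimension: attaching the contribution of a $d$-simplex $\sigma$ amounts to coning off a copy of $\Delta_\sigma$ along a highly connected attaching map, and the connectivity $(k-d)$ of $\Delta_\sigma$ guarantees that this attachment leaves $\pi_j$ unchanged for $j\le k$. Tracking this through the skeletal filtration is a homotopy-excision (Blakers--Massey) argument, combined with the relative Hurewicz theorem to pass between homology and homotopy and with van Kampen's theorem to handle $\pi_1$ — the same tools already invoked in the discussion preceding Proposition \ref{proposition:standard}. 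A self-contained alternative, valid for finite $I$, is a double induction on $|I|$ and on $k$: splitting off one subcomplex $\Delta_N$ writes $\Delta=U\cup\Delta_N$ with $U=\Delta_1\cup\cdots\cup\Delta_{N-1}$, so that $U\cap\Delta_N$ is covered by the $\Delta_i\cap\Delta_N$, whose $t$-fold intersections are $(k-t)$-connected; thus the inductive hypothesis applies to this cover with $k$ lowered by one, and a Mayer--Vietoris and van Kampen comparison of the decomposition $\Delta=U\cup\Delta_N$ with the corresponding star/link decomposition $\mathcal{N}=\mathcal{N}(U)\cup\operatorname{st}(N)$ of the nerve closes the induction.
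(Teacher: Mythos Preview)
The paper does not actually prove this theorem: it is quoted from \cite{Bjorner} and marked with a \qed, so there is no in-paper argument to compare against. Your proposal is a correct and standard proof of the Nerve Theorem via the homotopy-colimit/Projection-Lemma route (with the alternative double induction also sketched correctly), which is exactly the kind of argument underlying the cited reference; since the authors treat the result as a black box, you have supplied more than the paper does.
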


\begin{proposition} \label{proposition:linkQFQT}
The complexes $\mathrm{lk}_{\downarrow}^{QF}(x^{k}a^{\ell})$ and $\mathrm{lk}_{\downarrow}^{QT}(x^{k}a^{\ell})$ are 
$n$-connected ($n \geq 0$) if $k \geq 3n+5$ and $\ell \geq 2n+3$.
\end{proposition}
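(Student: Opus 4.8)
The plan is to prove the connectivity statement by induction on $\ell$, using covers by subcomplexes and the Nerve Theorem at each stage. The base case $\ell = 1$ (a single color) should be handled first: here $\mathrm{lk}_{\downarrow}^{QF}(x^k a)$ is essentially the ``arc complex'' whose vertices are pairs $(m,m+1)$ for $m \in \{1,\dots,k-1\}$ with adjacency given by disjointness of the underlying index pairs, and similarly $\mathrm{lk}_{\downarrow}^{QT}(x^k a)$ adds the ``wrap-around'' vertices $(k,1)$. These are (isomorphic to, or closely modeled on) the matching complexes of a path and a cycle on $k$ vertices; their connectivity is classical (Bouc, Björner--Lovász--Vrećica--Živaljević): the matching complex of the path $P_k$ is roughly $\lfloor (k-2)/3 \rfloor$-connected, and that of the cycle $C_k$ is roughly $\lfloor (k-3)/3 \rfloor$-connected. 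One checks that $k \geq 3n+5$ forces these to be at least $n$-connected, which establishes the base case with room to spare.

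\textbf{Inductive step.} For $\ell \geq 2$, I would cover $\mathrm{lk}_{\downarrow}^{QF}(x^k a^\ell)$ (and likewise the $QT$ version) by the subcomplexes $\Delta_p$, for $p \in \{1,\dots,\ell\}$, where $\Delta_p$ is the full subcomplex spanned by all vertices $(m,m+1,q)$ with $q \neq p$ together with all vertices using color $p$; more precisely one wants a cover in which each piece ``ignores'' one color in a controlled way, so that each $\Delta_p$ deformation-retracts onto a copy of $\mathrm{lk}_{\downarrow}(x^k a^{\ell-1})$ and each nonempty $t$-fold intersection deformation-retracts onto a copy of $\mathrm{lk}_{\downarrow}(x^{k'} a^{\ell - t})$ for a suitably reduced $k'$. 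The key combinatorial point is that fixing which simplices may use a given subset of colors imposes exactly a ``number of $x$-indices consumed'' constraint, so an intersection of $t$ pieces looks like the descending link on a word with $t$ fewer $a$'s and with $k$ reduced by at most a bounded amount depending on $t$. Using the inductive hypothesis on each intersection (with $k$ still large enough because $k \geq 3n+5$ and each step costs a bounded amount), one gets that the $t$-fold intersections are $(n-t+1)$-connected, as the Nerve Theorem requires; and the nerve of the cover is a simplex (or is itself highly connected), hence contractible, so the conclusion $n$-connectedness of the whole link follows.

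The main obstacle, and the step I expect to require the most care, is getting the bookkeeping in the inductive step exactly right: precisely which full subcomplexes to take so that (i) they genuinely cover the link, (ii) each multi-fold intersection is homotopy equivalent to a smaller descending link rather than to something more complicated, and (iii) the numerical hypotheses $k \geq 3n+5$, $\ell \geq 2n+3$ are preserved under the reduction (i.e.\ that each of the at most $n+1$ intersection-levels one needs only lowers the effective $k$ by $3$ and the effective $\ell$ by $1$, matching the two coefficients $3$ and $2$ in the statement — note $\ell \geq 2n+3$ allows roughly $n+1$ colors to be peeled off while keeping enough colors to run the argument, and the ``$+2$'' slack absorbs the Nerve Theorem's $(n-t+1)$ requirement). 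A secondary subtlety is the difference between the $QF$ and $QT$ cases: the wrap-around vertices $(k,1,p)$ in $\mathrm{lk}_{\downarrow}^{QT}$ behave cyclically, so the reduction should be set up symmetrically enough that the cyclic case reduces to the cyclic case (matching complex of a cycle, not a path) at every level; this is why the $QT$ estimate needs no worse a constant than the $QF$ estimate, the cycle matching complex being only one unit less connected than the path one, which is already absorbed by the slack.

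\textbf{Alternative.} If the color-by-color induction proves awkward, a cleaner route is a double induction: cover instead by subcomplexes indexed by the $x$-index $k$ (``stars'' of the extreme vertices), reducing $k$ by a bounded amount while keeping $\ell$ fixed, and interleave this with the color induction — effectively proving the two-variable statement $\mathrm{lk}_{\downarrow}(x^k a^\ell)$ is $\min(\lfloor(k-5)/3\rfloor, \lfloor(\ell-3)/2\rfloor)$-connected (or some such explicit bound) by induction on $k+\ell$. Either way the engine is the Nerve Theorem plus the classical matching-complex connectivity bounds as the base; the only real work is verifying the homotopy type of the intersections and the arithmetic of the connectivity drop.
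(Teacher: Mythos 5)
There is a genuine gap, and it is in your base case. You identify $\mathrm{lk}_{\downarrow}^{QF}(x^k a)$ with the matching complex of a path, with ``adjacency given by disjointness of the underlying index pairs.'' But by Definition \ref{definition:links}, a collection of vertices $(m_i,m_i+1,p_i)$ spans a simplex only if the $m$-indices are disjoint \emph{and} the colors $p_1,\dots,p_\alpha$ are pairwise distinct (each transistor must consume its own ``$a$''-wire). With $\ell=1$ every vertex carries the same color, so no two vertices span an edge: $\mathrm{lk}_{\downarrow}^{QF}(x^k a)$ is a discrete set of $k-1$ points, not even path-connected for $k\geq 3$. The classical Bouc/BLVZ bounds for matching complexes of paths and cycles therefore do not apply; the relevant object is a \emph{colored} matching complex whose connectivity genuinely depends on $\ell$ as well as $k$ --- which is exactly why the proposition carries the hypothesis $\ell \geq 2n+3$ and why your base case, which would yield connectivity from $k$ alone, cannot be right. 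An induction on $\ell$ would have to bottom out near $\ell = 2n+3$, where essentially all of the work remains to be done. A secondary issue: your cover $\{\Delta_p\}$ is not well defined as written (``all vertices with $q\neq p$ together with all vertices using color $p$'' is the whole vertex set), and the natural repair --- $\Delta_p$ = simplices avoiding color $p$ --- fails to cover the simplices that use all $\ell$ colors, so one must restrict to a skeleton before applying the Nerve Theorem.

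For comparison, the paper's proof is a single induction on $n$. It covers the $(n+2)$-skeleton of $\mathrm{lk}_{\downarrow}^{QF}(x^k a^\ell)$ by the $(n+2)$-skeletons of the simplicial neighborhoods of the vertices $(1,2,\alpha)$ and $(2,3,\alpha)$, $\alpha\in\{1,\dots,\ell\}$ (the restriction to the skeleton is needed precisely because these neighborhoods do not cover the whole complex when $k \gg \ell$). A color-counting argument shows any $n+3$ members of the cover meet, so the nerve is $(n+1)$-connected, and each nonempty $t$-fold intersection is identified with the $(n+2)$-skeleton of $\mathrm{lk}_{\downarrow}^{QF}(x^{k'}a^{\ell'})$ with $k'\geq k-3$ and $\ell'\geq \ell-t$, to which the inductive hypothesis applies. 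Your ``alternative'' double-induction outline is closer in spirit to this, and your instinct that the two coefficients $3$ and $2$ track the per-step losses in $k$ and $\ell$ is correct; but as proposed, the argument does not go through because its foundation --- the uncolored matching-complex base case --- is false.
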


\begin{proof}
The proofs are nearly identical, whether we consider $\mathrm{lk}_{\downarrow}^{QF}(x^{k}a^{\ell})$ or $\mathrm{lk}_{\downarrow}^{QT}(x^{k}a^{\ell})$; we will give a detailed proof in the former case.
 
We prove the statement by induction on $n$. Note that 
the given link is non-empty when $k \geq 2$ and $\ell \geq 1$. 

Consider the base case $n=0$. The sequence
\[ (1,2,\alpha), (3,4,\beta), (1,2,\gamma) \]
determines an edge-path in $\mathrm{lk}_{\downarrow}^{QF}(x^{k}a^{\ell})$, where $\alpha$, $\beta$, $\gamma \in \{ 1, \ldots, \ell \}$
are all distinct. It follows that any two vertices of the form 
$(1,2,\alpha)$ can be connected by an edge-path. Clearly, any vertex of
the form $(m,m+1,\beta)$ $(m > 2)$ is adjacent to some $(1,2,\alpha)$. Finally, note that any vertex $(2,3,\beta)$ can be connected to a vertex of the form $(1,2,\alpha)$ by the following edge-path:
\[ (2,3,\beta), (4,5,\gamma), (1,2,\alpha). \]
It now follows easily that $\mathrm{lk}_{\downarrow}^{QF}(x^{k}a^{\ell})$ is connected if $k \geq 5$ and $\ell \geq 3$.

Now assume that the statement of the proposition is true for $0 \leq j \leq n$. We consider 
$\mathrm{lk}_{\downarrow}^{QF}(x^{k}a^{\ell})$, where $k \geq 3(n+1) + 5$ and $\ell \geq 2(n+1) + 3$. Following the basic strategy of \cite{Far(th)} (in the proof of Proposition 4.11), we would like to cover $\mathrm{lk}_{\downarrow}^{QF}(x^{k}a^{\ell})$ by the simplicial neighborhoods of the vertices
\[ \mathcal{C} = \{ (1,2,\alpha), (2,3,\alpha) \mid \alpha \in \{ 1, \ldots, \ell \} \}. \]
Unfortunately, these simplicial neighborhoods do not cover 
$\mathrm{lk}_{\downarrow}^{QF}(x^{k}a^{\ell})$ in general. (If $k >> \ell$, then it is possible to find a simplex $S$ that ``uses" all of the colors $1, \ldots, \ell$, but none of the vertices in the above collection; such a simplex clearly cannot be part of the simplicial neighborhood of any of the above vertices.) We will instead consider the $(n+2)$-skeleton of $\mathrm{lk}_{\downarrow}^{QF}(x^{k}a^{\ell})$. We claim that the $(n+2)$-skeleton is indeed 
covered by the $(n+2)$-skeletons of the simplicial neighborhoods considered above, as we now prove.

Let $S = \{ (m_{1}, m_{1}+1, \alpha_{1}), \ldots,
(m_{p}, m_{p}+ 1, \alpha_{p} ) \}$ be a simplex in the $(n+2)$-skeleton. If one of the $m_{i}$ is $1$ or $2$, then it is clear that the given simplex is in the simplicial neighborhood of some vertex in $\mathcal{C}$, and the desired conclusion follows. If all of the $m_{i}$ are greater than $2$, then, since $p \leq n+3 < 2n+5 \leq \ell$, there is a color $\beta \in (\{ 1, \ldots, \ell \} - \{ \alpha_{1}, \ldots, \alpha_{p} \})$. The simplex $S$ is therefore in the simplicial neighborhood of
$(1,2,\beta)$. Thus clearly $S$ is in the $(n+2)$-skeleton of the simplicial neighborhood of $(1,2,\beta)$. This proves the claim.

We let $C(m,m+1,\beta)$ denote the $(n+2)$-skeleton of the simplicial neighborhood of $(m,m+1, \beta)$ in
$\mathrm{lk}_{\downarrow}^{QF}(x^{k}a^{\ell})$. We consider the
cover
\[ \widehat{\mathcal{C}} = \{ C(m,m+1, \beta) \mid m \in \{1, 2 \}; \beta \in \{ 1, \ldots, \ell \} \}. \]

We first claim that the nerve 
$\mathcal{N}( \widehat{\mathcal{C}})$ is $(n+1)$-connected. 
We will prove this by showing that every $(n+3)$ vertices of the nerve span an $(n+2)$-simplex of the nerve. It will then follow that the nerve contains the $(n+2)$-skeleton of a high-dimensional simplex, and is therefore $(n+1)$-connected. So, take any $n+3$ members of $\widehat{\mathcal{C}}$. These elements
\[ C(m_{1}, m_{1}+1, \beta_{1}), \ldots, C(m_{n+3}, m_{n+3} + 1, \beta_{n+3}) \]
$(m_{i} \in \{ 1, 2 \})$ collectively ``use" at most $n+3$ colors, out of a total of at least $2n+5$. Thus, there is a $\beta \in (\{ 1, \ldots, \ell \} - \{\beta_{1}, \ldots, \beta_{n+3} \})$, and it follows that the intersection
\[ \bigcap_{i=1}^{n+3} C(m_{i}, m_{i+1}, \beta_{i}) \]
is non-empty. (It contains the vertex $(4,5,\beta)$, for instance.) This proves that the nerve $\mathcal{N}(\widehat{\mathcal{C}})$ is $(n+1)$-connected.

Now we want to prove that any non-empty $t$-fold intersection of
members of $\widehat{\mathcal{C}}$ is $(n-t+2)$-connected. Thus, we consider the intersection
\[ \bigcap_{i=1}^{t} C(m_{i}, m_{i}+1, \beta_{i}), \]
where $m_{i} \in \{ 1, 2 \}$. One easily checks that this intersection consists of all $q$-simplices $(q \leq n+2)$ on the set
\[ \{ (m, m+1, \gamma) \mid m \in \{ 3, \ldots, k-1 \}; \gamma \in (\{ 1, \ldots, \ell \} - \{ \beta_{1}, \ldots, \beta_{i} \}) 
\} \]
or on the set
\[ \{ (m, m+1, \gamma) \mid m \in \{ 4, \ldots, k-1 \}; \gamma \in (\{ 1, \ldots, \ell \} - \{ \beta_{1}, \ldots, \beta_{i} \}) 
\}. \]
It follows directly that the intersection may be identified with 
the $(n+2)$-skeleton of $\mathrm{lk}_{\downarrow}^{QF}(x^{k'}a^{\ell'})$, where $k' \geq k - 3$ and $\ell' \geq \ell - t$. We note first that 
\[ k' \geq k - 3 \geq 3(n+1) + 5 - 3 = 3n+5 \geq 3(n-t+2) + 5 \]
and
\[ \ell' \geq \ell - t \geq 2n+5-t. \]
We want to prove that $2n+5-t \geq 2(n-t+2)+3$, but this is easily seen to be equivalent to the inequality $t \geq 2$. 

We can now conclude that the intersection in question is the same as the $(n+2)$-skeleton 
of $\mathrm{lk}_{\downarrow}^{QF}(x^{k'}a^{\ell'})$, where $k' \geq 3(n-t+2) + 5$
and $\ell' \geq 2(n-t+2)+3$. It follows directly that the intersection is $(n-t+2)$-connected,
as required.

It follows from the Nerve theorem that $\mathrm{lk}_{\downarrow}^{QF}(x^{k}a^{\ell})$ is $(n+1)$-connected for $k \geq 3(n+1) + 5$ and $\ell \geq 2(n+1) + 3$, as required. This completes the induction, and proves the proposition in the case of $QF$.

Note that the required connectivity of the complex $\mathrm{lk}_{\downarrow}^{QT}(x^{k}a^{\ell})$ is proved almost exactly as above. In fact, the intersection of the cones $C(m,m+1, \beta)$ from the last part of the proof is isomorphic to a descending link $\mathrm{lk}_{\downarrow}^{QF}(x^{k'}a^{\ell'})$ (rather than 
a descending link of the form $\mathrm{lk}_{\downarrow}^{QT}(x^{k'}a^{\ell'})$, as one might expect).
\end{proof}

\subsubsection{The connectivity of the descending links in $K_{QV}$} \label{subsubsection:QVlinks}
Here we will determine the connectivity of the descending links $\mathrm{lk}_{\downarrow}^{QV}(x^{k}a^{\ell})$
by a method analogous to the one from \ref{subsubsection:links}. 

The argument makes use of the following lemma (Lemma 6 from \cite{Far(fp)}).

\begin{lemma} \label{farleylemma}
Let $K$ be a finite flag complex, and let $n \geq 0$.
The complex $K$ is $n$-connected if, for any collection of vertices $S \subseteq K^{(0)}$, $|S| \geq 2$, the intersection 
$\bigcap_{v \in S} \mathrm{lk}(v)$ is $(n+1-|S|)$-connected.
\qed
\end{lemma}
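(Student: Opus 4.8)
The plan is to deduce the lemma from the Nerve Theorem, applied to the cover of $K$ by the closed stars $\overline{\mathrm{st}}(v)$ of its vertices $v \in K^{(0)}$. These subcomplexes do cover $K$, since every simplex lies in the closed star of any of its vertices (we assume throughout that $K$ is nonempty). Each $\overline{\mathrm{st}}(v)$ is a cone on $\mathrm{lk}(v)$, hence contractible, and more generally, for a nonempty simplex $\sigma$ of $K$, the closed star $\overline{\mathrm{st}}(\sigma) = \sigma \ast \mathrm{lk}(\sigma)$ is contractible.

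The first step is to analyze the finite intersections of this cover. Fix a nonempty $S \subseteq K^{(0)}$ with $\bigcap_{v \in S}\overline{\mathrm{st}}(v) \neq \emptyset$. Using that $K$ is flag, one establishes a dichotomy: if $S$ spans a simplex of $K$, then $\bigcap_{v\in S}\overline{\mathrm{st}}(v) = \overline{\mathrm{st}}(S)$, which is contractible; if $S$ does not span a simplex, then (again by flagness) every face $\tau$ in the intersection must be disjoint from $S$ — if some $v_0 \in S$ lay in $\tau$ and $v_1 \in S$ were a non-neighbor of $v_0$, then $\tau \cup \{v_1\}$ would contain the non-edge $\{v_0,v_1\}$ — so that $\bigcap_{v\in S}\overline{\mathrm{st}}(v) = \bigcap_{v\in S}\mathrm{lk}(v)$, which by the hypothesis of the lemma is $(n+1-|S|)$-connected whenever $|S| \geq 2$. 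Since $n+1-|S| = n-|S|+1$ and contractible complexes are arbitrarily highly connected, this shows that every nonempty $t$-fold intersection of members of the cover is $(n-t+1)$-connected, which is exactly the hypothesis required by the Nerve Theorem with $k = n$.

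The second step is to show that the nerve $\mathcal{N}$ of the cover is $n$-connected. For any $S \subseteq K^{(0)}$ with $|S| \leq n+2$, the intersection $\bigcap_{v\in S}\overline{\mathrm{st}}(v)$ is nonempty: this is immediate when $S$ spans a simplex, and otherwise the intersection equals $\bigcap_{v\in S}\mathrm{lk}(v)$, which the hypothesis guarantees is $(n+1-|S|)$-connected with $n+1-|S| \geq -1$, hence nonempty. Therefore $\mathcal{N}$ contains the $(n+1)$-skeleton of the full simplex on $K^{(0)}$; any complex lying between that skeleton and the full simplex has vanishing homotopy groups in degrees $\leq n$, so $\mathcal{N}$ is $n$-connected. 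The Nerve Theorem then yields that $K$ is $n$-connected, completing the proof.

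The homotopy-theoretic content is carried by the Nerve Theorem, so the step I expect to require the most care is the dichotomy in the second paragraph: the closed-star cover is engineered precisely so that each nonempty multi-intersection is either contractible (when the indexing set spans a simplex) or coincides with one of the very link-intersections constrained by the hypothesis (when it does not), and flagness is exactly the property that makes these two cases exhaustive and makes the identifications clean. The only other points needing attention are the index arithmetic in the Nerve Theorem and the bookkeeping that separates "nonempty" from "$(-1)$-connected" when verifying connectivity of the nerve.
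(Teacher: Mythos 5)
First, a point of comparison: the paper does not prove this lemma at all --- it is quoted as Lemma~6 of \cite{Far(fp)} and stamped with a \qed --- so there is no in-paper argument to measure yours against. Your strategy (Nerve Theorem applied to the cover of $K$ by closed stars, contractibility of the stars, nonemptiness of all intersections of at most $n+2$ stars forcing the nerve to contain the $(n+1)$-skeleton of the full simplex on $K^{(0)}$) is the natural one, and the index arithmetic in both applications of the Nerve Theorem checks out.

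However, the dichotomy in your second paragraph contains a genuine error. It is not true that when $S$ fails to span a simplex every simplex of $\bigcap_{v\in S}\overline{\mathrm{st}}(v)$ is disjoint from $S$: your justification only excludes a vertex $v_0\in S\cap\tau$ that has a non-neighbour in $S$, and says nothing about a vertex of $S$ that is adjacent to every \emph{other} vertex of $S$ even though $S$ as a whole spans no simplex. Concretely, let $K$ be the path on vertices $u,w,u'$ with edges $uw$ and $wu'$ (a flag complex) and $S=\{u,w,u'\}$; then $S$ spans no simplex, yet $\bigcap_{v\in S}\overline{\mathrm{st}}(v)=\{w\}$ while $\bigcap_{v\in S}\mathrm{lk}(v)=\emptyset$, so your claimed equality fails. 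The repair is local. By flagness, $\tau\in\bigcap_{v\in S}\overline{\mathrm{st}}(v)$ if and only if every vertex of $\tau$ is adjacent or equal to every vertex of $S$. Setting $A=\{v\in S:\ v\ \text{is adjacent to every vertex of}\ S\setminus\{v\}\}$, one checks that $A$ spans a simplex, that every vertex of $A$ is adjacent to every vertex of $\bigcap_{v\in S}\mathrm{lk}(v)$, and hence that
\[ \bigcap_{v\in S}\overline{\mathrm{st}}(v)\;=\;\sigma_A \ast \Bigl(\,\bigcap_{v\in S}\mathrm{lk}(v)\Bigr), \]
where $\sigma_A$ is the (possibly empty) simplex spanned by $A$. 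If $A\neq\emptyset$ (which subsumes your ``$S$ spans a simplex'' case) the intersection is a cone, hence contractible; if $A=\emptyset$ it is exactly $\bigcap_{v\in S}\mathrm{lk}(v)$, which your hypothesis controls. Either way every nonempty $t$-fold intersection is $(n-t+1)$-connected, so the rest of your argument goes through unchanged once this step is corrected.
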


\begin{proposition} \label{linkconnectednessV}
For $n \geq 0$, the descending link \lkb{k}{\ell} is $n$-connected when $k \geq 4n+5$ and $l \geq 2n+3$.
\end{proposition}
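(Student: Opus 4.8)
The plan is to prove Proposition~\ref{linkconnectednessV} by induction on $n$, following the same architecture used for $\mathrm{lk}_{\downarrow}^{QF}$ and $\mathrm{lk}_{\downarrow}^{QT}$ in Proposition~\ref{proposition:linkQFQT}, but now using Lemma~\ref{farleylemma} in place of (or in addition to) the Nerve Theorem, since the vertex set $D_v$ here is much larger: a vertex is a triple $(m,n,p)$ with $m \neq n$ both free in $\{1,\dots,k\}$, so the combinatorics are closer to those of the ``pure'' braided case treated in \cite{Far(fp)}. For the base case $n=0$ I would argue directly that $\mathrm{lk}_{\downarrow}^{QV}(x^k a^\ell)$ is connected when $k \geq 5$, $\ell \geq 3$: given two vertices $(m_1,n_1,p_1)$ and $(m_2,n_2,p_2)$, I can find a triple $(m_3,n_3,p_3)$ whose three $x$-coordinates $\{m_3,n_3\}$ avoid $\{m_1,n_1\}$ and whose color $p_3$ avoids $p_1$ (possible since $k\geq 5$, $\ell \geq 3$ give enough room), so the two vertices connect through an intermediate path; this uses the fact that in the $QV$ link the $x$-coordinates are essentially unconstrained, which actually makes connectivity \emph{easier} than in $QF$ — hence the mild improvement would-be-worse constant and the genuine constant $4n+5$ reflecting how much ``color room'' versus ``$x$-room'' the induction consumes.

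For the inductive step, assuming the statement for all $j \le n$, I would take $k \geq 4(n+1)+5$ and $\ell \geq 2(n+1)+3$ and apply Lemma~\ref{farleylemma} to $K = \mathrm{lk}_{\downarrow}^{QV}(x^k a^\ell)$ (which is flag by Remark~\ref{remark:link}). So I must show: for every vertex set $S$ with $|S| = t \geq 2$, the intersection $\bigcap_{v \in S} \mathrm{lk}(v)$ is $(n+2-t)$-connected. The key computation is to identify this intersection. Each vertex $v_i = (m_i,n_i,p_i) \in S$, when we pass to its link, forbids using the $x$-slots $m_i, n_i$ and the color $p_i$ in any triple adjacent to all of $S$; so $\bigcap_{v \in S}\mathrm{lk}(v)$ is (isomorphic to) the full subcomplex of $\mathrm{lk}_{\downarrow}^{QV}(x^{k'}a^{\ell'})$ on the surviving slots, where we have removed at most $2t$ of the $x$-slots and at most $t$ of the colors — so $k' \geq k - 2t$ and $\ell' \geq \ell - t$. (One must be a little careful: $t$ triples use at most $2t$ distinct $x$-values, possibly fewer if they overlap, but removing \emph{all} of $\{m_i,n_i\}$ only shrinks the complex, so the worst case $k' \geq k-2t$ is the safe bound; similarly for colors.) Then I check the arithmetic: $k' \geq k - 2t \geq 4(n+1)+5 - 2t = 4n+9-2t$, and I need this $\geq 4(n+2-t)+5 = 4n+13-4t$, i.e. $2t \geq 4$, i.e. $t \geq 2$ — which holds. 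Likewise $\ell' \geq \ell - t \geq 2(n+1)+3-t = 2n+5-t \geq 2(n+2-t)+3 = 2n+7-2t$ iff $t \geq 2$, which again holds. So by induction the intersection is $(n+2-t) = (n+1+1-t)$-connected, which is exactly the hypothesis Lemma~\ref{farleylemma} needs (with ``$n$'' there equal to $n+1$ here), and the lemma yields that $\mathrm{lk}_{\downarrow}^{QV}(x^k a^\ell)$ is $(n+1)$-connected.

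The main obstacle I anticipate is pinning down the precise identification of the $t$-fold intersection with a smaller descending link, and in particular verifying that it really is a \emph{full} subcomplex on the surviving vertex set — this is where the flag property of $\mathrm{lk}_{\downarrow}^{QV}$ (Remark~\ref{remark:link}) and the ``disjointness'' description of simplices (Definition~\ref{definition:links}) must be combined carefully. A secondary subtlety is that $\bigcap_{v\in S}\mathrm{lk}(v)$ could be empty for some $S$, in which case Lemma~\ref{farleylemma} imposes no condition for that $S$; but when the intersection is non-empty I should double-check that the surviving slots still satisfy $k' \geq 2$, $\ell' \geq 1$ so the smaller link is non-empty and the inductive hypothesis applies meaningfully. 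Finally I would remark that the argument is genuinely simpler than for $QF$/$QT$ because no planarity or annular constraint ties down the first two coordinates, so one does not need the skeleton-truncation trick used in the proof of Proposition~\ref{proposition:linkQFQT}; the cleaner combinatorics is precisely why Lemma~\ref{farleylemma} applies directly and gives the bound $k \geq 4n+5$, $\ell \geq 2n+3$.
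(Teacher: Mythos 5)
Your proposal follows essentially the same route as the paper: the base case is a direct path-connectivity argument, and the inductive step applies Lemma~\ref{farleylemma} to the flag complex $\mathrm{lk}_{\downarrow}^{QV}(x^{k}a^{\ell})$, identifies the $t$-fold intersection of links with a smaller descending link $\mathrm{lk}_{\downarrow}^{QV}(x^{k'}a^{\ell'})$ with $k'\geq k-2t$ and $\ell'\geq \ell-t$, and closes with the arithmetic reduction to $t\geq 2$. Your inductive step is correct as stated (indeed your target of $(n+2-t)$-connectedness for the intersections is the right one for concluding $(n+1)$-connectedness from the lemma). The one point to tighten is the base case: when $m_1,n_1,m_2,n_2$ are four distinct slots and $k=5$, there is \emph{no} single vertex whose two $x$-coordinates avoid all four, so a common neighbor need not exist; the paper handles this by observing that two such vertices are already adjacent unless their colors agree, and in that case routes through a vertex of the form $(m_2,M,\beta)$ sharing a slot with one endpoint, followed by the ``three or fewer distinct slots'' case. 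Your sketch constructs a vertex adjacent only to the first endpoint and gestures at ``an intermediate path,'' so you should spell out this two-step detour explicitly.
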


\begin{proof}
We will use induction to show that this is true for any $n \geq 0$.

Consider $\lkb{k}{\ell}$; let $k \geq 5$ and $\ell \geq 3$. We will show that $\lkb{k}{\ell}$ is $0$-connected.
Let $(m,n,\alpha)$ and $(m',n', \alpha')$ be arbitrary vertices of $\lkb{k}{\ell}$. Note that, if $\{ m, n \} \cup 
\{ m', n'\}$ has three or fewer elements, then the vertices in question are clearly connected by a path, since
$(m'', n'', \alpha'')$ is adjacent to both vertices if
$m'', n'' \notin \{ m, n \} \cup \{ m', n' \}$
and $\alpha'' \notin \{ \alpha', \alpha'' \}$. Thus, we may assume that $m,n, m',$ and $n'$ are all different. In this case, there is nothing to prove unless $\alpha = \alpha'$ (otherwise the vertices are adjacent by definition). Let 
$M \notin \{m, n, m', n' \}$. We note that $(m', M, \beta)$
($\beta \neq \alpha$) is adjacent to $(m,n,\alpha)$. Now, since $(m', M, \beta)$ may be connected to $(m',n', \alpha')$ by the previous argument, it follows that $(m,n,\alpha)$
and $(m',n',\alpha')$ may be connected by a path. This proves that $\lkb{k}{\ell}$ is $0$-connected.

Now let $k \geq 4(n+1)+5$ and $\ell \geq 2(n+1)+3$, where $n \geq 0$. We assume that the statement of the proposition is true for $0 \leq j \leq n$. Let $S$ be an arbitrary collection of vertices in $\lkb{k}{\ell}$. We may assume that $|S| \leq n+2$ (otherwise there is nothing to prove). We note that
\[ \bigcap_{v \in S} \mathrm{lk}(v) \]
is simply the full subcomplex of $\lkb{k}{\ell}$ on the vertices $\hat{v}$ that have no overlap with any of the $v \in S$. (I.e., $\hat{v}$ and $v$ represent disjoint applications of relations (as in Definition \ref{definition:disjointapp}) to $x^{k}a^{\ell}$, for all $v 
\in S$.) It follows that
\[ \bigcap_{v \in S} \mathrm{lk}(v) \cong \lkb{k'}{\ell'}, \]
where $k' \geq 4n+9 - 2|S|$ and $\ell' \geq 2n+5 - |S|$. By Lemma \ref{farleylemma} and induction, it now suffices to show that 
\[ 4n+9 - 2|S| \geq 4(n+1- |S|) + 5 \]
and
\[ 2n+5 - |S| \geq 2(n+1 - |S|) + 3.\] Both of these inequalities are obvious. This completes the induction.
\end{proof}

\subsection{Conclusion: Proof of the $F_{\infty}$ property}
\label{subsection:conclusion}

It is now straightforward to complete the proof of the $F_{\infty}$ property.

\begin{proposition} \label{proposition:collection}
Let $X = K_{QF}, K_{QT},$ or $K_{QV}$.
\begin{enumerate}
\item If $X = K_{QF}$ or $K_{QT}$, then $X_{k}$ is $n$-connected if $k \geq 3n+5$. 
\item If $X = K_{QV}$, then $X_{k}$ is $n$-connected if
$k \geq 4n+5$.
\end{enumerate}
\end{proposition}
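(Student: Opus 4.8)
The plan is to combine Proposition \ref{proposition:standard} with the connectivity bounds for descending links that were just established (Propositions \ref{proposition:linkQFQT} and \ref{linkconnectednessV}). Recall that Proposition \ref{proposition:standard} says that if $\mathrm{lk}_{\downarrow}(x^{k}a^{k-1})$ is $n$-connected for all $k \geq N$, then $X_{m}$ is $n$-connected for all $m \geq N$. So the entire proof is a matter of determining, for each complex, the threshold $N$ past which the relevant descending link is $n$-connected, and then reading off the conclusion.

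First I would handle the case $X = K_{QF}$ or $K_{QT}$. Here the descending link attached at a vertex with bottom label $x^{k}a^{k-1}$ is $\mathrm{lk}_{\downarrow}^{QF}(x^{k}a^{k-1})$ (resp.\ $\mathrm{lk}_{\downarrow}^{QT}$), so we take $\ell = k-1$ in Proposition \ref{proposition:linkQFQT}. That proposition guarantees $n$-connectivity provided $k \geq 3n+5$ and $\ell = k-1 \geq 2n+3$, i.e.\ $k \geq 2n+4$. Since $3n+5 \geq 2n+4$ for all $n \geq 0$, the single inequality $k \geq 3n+5$ suffices to make $\mathrm{lk}_{\downarrow}^{QF}(x^{k}a^{k-1})$ $n$-connected. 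Applying Proposition \ref{proposition:standard} with $N = 3n+5$ then gives that $X_{k}$ is $n$-connected for all $k \geq 3n+5$, which is statement (1).

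The case $X = K_{QV}$ is identical in structure, now invoking Proposition \ref{linkconnectednessV}: the descending link $\mathrm{lk}_{\downarrow}^{QV}(x^{k}a^{k-1})$ is $n$-connected when $k \geq 4n+5$ and $\ell = k-1 \geq 2n+3$, i.e.\ $k \geq 2n+4$; again the first inequality dominates since $4n+5 \geq 2n+4$, so $k \geq 4n+5$ alone forces $n$-connectivity of the descending link. Proposition \ref{proposition:standard} with $N = 4n+5$ yields statement (2). There is no real obstacle here — the work was all done in establishing the descending-link connectivity; the only thing to verify carefully is the trivial arithmetic that in each case the ``$x$''-exponent bound is the binding constraint once we substitute $\ell = k-1$, so the stated single hypothesis on $k$ implies both hypotheses needed to apply the earlier propositions.

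\begin{proof}
We apply Proposition \ref{proposition:standard}, which reduces the claim to computing the connectivity of the descending links $\mathrm{lk}_{\downarrow}(x^{k}a^{k-1})$.

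(1) Suppose $X = K_{QF}$ (the case $X = K_{QT}$ is verbatim the same, using $\mathrm{lk}_{\downarrow}^{QT}$ in place of $\mathrm{lk}_{\downarrow}^{QF}$). Here the descending link of $x^{k}a^{k-1}$ is $\mathrm{lk}_{\downarrow}^{QF}(x^{k}a^{k-1})$, so we apply Proposition \ref{proposition:linkQFQT} with $\ell = k-1$. That result gives that $\mathrm{lk}_{\downarrow}^{QF}(x^{k}a^{k-1})$ is $n$-connected provided $k \geq 3n+5$ and $k - 1 \geq 2n+3$. The second inequality is equivalent to $k \geq 2n+4$, and since $3n+5 \geq 2n+4$ for every $n \geq 0$, the hypothesis $k \geq 3n+5$ implies both. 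Hence $\mathrm{lk}_{\downarrow}^{QF}(x^{k}a^{k-1})$ is $n$-connected for all $k \geq 3n+5$. By Proposition \ref{proposition:standard} (with $N = 3n+5$), $X_{k}$ is $n$-connected for all $k \geq 3n+5$.

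(2) Suppose $X = K_{QV}$. Now the descending link of $x^{k}a^{k-1}$ is $\mathrm{lk}_{\downarrow}^{QV}(x^{k}a^{k-1})$, and we apply Proposition \ref{linkconnectednessV} with $\ell = k-1$. That result gives $n$-connectivity provided $k \geq 4n+5$ and $k - 1 \geq 2n+3$. Again the second inequality reads $k \geq 2n+4$, and $4n+5 \geq 2n+4$ for every $n \geq 0$, so $k \geq 4n+5$ implies both. Thus $\mathrm{lk}_{\downarrow}^{QV}(x^{k}a^{k-1})$ is $n$-connected for all $k \geq 4n+5$, and Proposition \ref{proposition:standard} (with $N = 4n+5$) gives that $X_{k}$ is $n$-connected for all $k \geq 4n+5$.
\end{proof}
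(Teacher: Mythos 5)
Your proof is correct and takes exactly the same route as the paper: combine Proposition \ref{proposition:standard} with the descending-link connectivity bounds of Propositions \ref{proposition:linkQFQT} and \ref{linkconnectednessV}, specialized to $\ell = k-1$. You are in fact slightly more careful than the paper's own (very terse) proof, since you explicitly verify that the hypothesis $k \geq 3n+5$ (resp.\ $k \geq 4n+5$) subsumes the requirement $\ell = k-1 \geq 2n+3$.
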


\begin{proof}
If $X = K_{QF}$ or $K_{QT}$, then, by Propositions 
\ref{proposition:standard} and \ref{proposition:linkQFQT}, the descending link
$\mathrm{lk}_{\downarrow}(x^{k}a^{k-1})$ is $n$-connected provided that $k \geq 3n+5$. 

Similarly, if $X = K_{QV}$, then, by Propositions 
\ref{proposition:standard} \ref{linkconnectednessV}, the descending link $\mathrm{lk}_{\downarrow}(x^{k}a^{k-1})$ is $n$-connected provided that $k \geq 4n+5$.
\end{proof}

\begin{theorem} \label{theorem:bigone}
The groups $QF$, $QT$, and $QV$ have type $F_{\infty}$.
\end{theorem}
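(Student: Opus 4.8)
The plan is to verify all the hypotheses of Brown's Finiteness Criterion (Theorem \ref{theorem:Brown}) for each of the three groups acting on its associated cubical complex, since essentially all of the hard work has already been carried out in the preceding subsections. First I would fix $G$ to be one of $QF$, $QT$, $QV$ and let $X$ be the corresponding complex $K_{QF}$, $K_{QT}$, or $K_{QV}$. Hypotheses (1) and (2) are immediate: by Proposition \ref{proposition:CAT0cubical} (together with Proposition \ref{proposition:orbit} in the case $X = K_{QV}$), each such $X$ is a CAT(0) cubical complex, hence contractible, and $G$ acts on it properly, isometrically, and by cell-permuting automorphisms, so in particular cellularly.

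For hypothesis (3), I would take the filtration $\{X_n \mid n \in \mathbb{N}\}$ of Definition \ref{definition:filtration}. Conditions 3a), 3b), and 3c) are supplied directly by Proposition \ref{proposition:filtration}: the subcomplexes $X_n$ exhaust $X$, each is $G$-invariant with cocompact $G$-action (using finiteness of the vertex links), and the cell stabilizers are finite. The remaining condition 3d) asks that for each $k \geq 0$ there be an $N$ with $X_n$ $k$-connected for all $n \geq N$; this is exactly Proposition \ref{proposition:collection}, where one takes $N = 3k+5$ when $X$ is $K_{QF}$ or $K_{QT}$, and $N = 4k+5$ when $X$ is $K_{QV}$. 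With all of (1), (2), and (3a)--(3d) in hand, Brown's criterion gives that $G$ has type $F_{\infty}$, and applying this to each of the three groups in turn completes the argument.

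The only genuinely substantive ingredient in this chain is the connectivity bound of Proposition \ref{proposition:collection}, which rests on the Morse-theoretic identification of the connectivity of $X_n$ with that of the descending links $\mathrm{lk}_{\downarrow}(x^{k}a^{k-1})$ (Proposition \ref{proposition:standard}) and on the explicit link-connectivity estimates of Propositions \ref{proposition:linkQFQT} and \ref{linkconnectednessV}. Given those, there is no further obstacle: the proof of the theorem itself is a routine assembly of the cited results, and I expect the write-up to be short.
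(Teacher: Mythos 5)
Your proposal is correct and follows essentially the same route as the paper's own proof: both arguments simply assemble Proposition \ref{proposition:CAT0cubical} (and Proposition \ref{proposition:orbit}) for hypotheses (1) and (2) of Theorem \ref{theorem:Brown}, Proposition \ref{proposition:filtration} for (3a)--(3c), and Proposition \ref{proposition:collection} for (3d). Nothing further is needed.
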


\begin{proof}
The complexes $X = K_{QF}$, $K_{QT}$, and $K_{QV}$ admit actions by $QF$, $QT$, and $QV$ (respectively) satisfying (1) and (2) from Theorem \ref{theorem:Brown}. This was established by the end of Subsection
\ref{subsection:overview}. In Subsection \ref{subsection:filtration}, we established that the complexes $X$ admit filtrations by subcomplexes
$\{ X_{k} \}$ such that (3) (a)-(c) from Theorem \ref{theorem:Brown} are satisfied. By Proposition \ref{proposition:collection}, these filtrations satisfy (3) (d) from Theorem \ref{theorem:Brown} as well.
It now follows from Theorem \ref{theorem:Brown} that $QF$, $QT$, and $QV$ have type $F_{\infty}$.
\end{proof}

\section{Some generalizations} \label{section:generalization}

Nucinkis and St. John-Green introduced the groups $\bar{Q}T$ and $\bar{Q}V$, and proved that both groups have type $F_{\infty}$. The group $\bar{Q}V$ is defined just as $QV$ was  (see Definition \ref{definition:QV}) , except that $\bar{Q}V$ is a set of automorphisms of $\mathcal{T} \cup \{ \ast \}$, where $\ast$ is an isolated point, rather than a set of automorphisms of $\mathcal{T}$, as in the case of $QV$. There is still a natural action of $\bar{Q}V$ on the 
set of ends of $\mathcal{T}$, which yields a homomorphism $\bar{Q}V \rightarrow V$. The inverse image of Thompson's group $T$ under this homomorphism is $\bar{Q}T$. 

In the vocabulary of the main body of the paper, $\bar{Q}V$ is the braided diagram group
$D_{b}(\mathcal{P}, xa)$, where $\mathcal{P} = \langle x, a \mid x = xax \rangle$. The group 
$\bar{Q}T$ is the subgroup of $\bar{Q}V$ consisting of diagrams that become (or remain) annular when all wires labelled ``$a$'' are erased. The groups $\bar{Q}V$ and $\bar{Q}T$ then act on cubical complexes analogous to the ones described in Subsections \ref{subsection:diagramcomplex} and \ref{subsection:QVdiagramgroup}: $\bar{Q}V$ acts on $\widetilde{K}_{b}(\mathcal{P}, xa)$ and $\bar{Q}T$ acts on the convex subcomplex spanned by diagrams that are annular (in the above extended sense). The proofs of the $F_{\infty}$ property from the main body of the paper apply to these groups with no essential changes. 

More generally, we can define a class of groups as follows. Let $\mathcal{F}_{k,\ell}$ denote an ordered forest consisting of $k$ infinite binary trees and $\ell$ isolated vertices. We let $QV_{k,\ell}$ denote the group of all bijections of the vertices of $\mathcal{F}_{k, \ell}$ that preserve left- and right-children, with at most finitely many exceptions (as in Definition \ref{definition:QV}). We let $QF_{k, \ell}$ and $QT_{k, \ell}$ be the subgroups of $QV_{k, \ell}$ that preserve the linear (respectively, cyclic) ordering of the ends. The group $QV_{k, \ell}$ is simply the braided diagram group $D_{b}(\mathcal{P}, x^{k}a^{\ell})$ ($\mathcal{P}$ as above), and the groups $QF_{k, \ell}$ and $QT_{k, \ell}$ have obvious definitions, analogous to the ones given above.
A minor variant of the main argument now proves
\begin{theorem}
The groups $QF_{k, \ell}$, $QT_{k, \ell}$, and $QV_{k, \ell}$ have type $F_{\infty}$, for $k, \ell \geq 0$.
\qed
\end{theorem}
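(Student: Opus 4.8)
The plan is to run the argument of Section~\ref{section:Finfinity} again, with only notational changes. First observe that $QV_{k,\ell}$ is the braided diagram group $D_{b}(\mathcal{P}, x^{k}a^{\ell})$, where $\mathcal{P} = \langle x, a \mid x = xax \rangle$, while $QF_{k,\ell}$ and $QT_{k,\ell}$ are the subgroups consisting of the diagrams that become planar, respectively annular, once every wire labelled ``$a$'' is deleted (exactly as in Subsection~\ref{subsection:CAT0cubical}). The case $k = 0$ is degenerate and must be dealt with separately: a transistor over $\mathcal{P}$ meets an $x$-wire on each of its sides, so no transistor can attach to a word in $\{x,a\}^{+}$ containing no letter $x$; hence $D_{b}(\mathcal{P}, a^{\ell})$, and with it $QF_{0,\ell}$ and $QT_{0,\ell}$, is just the finite symmetric group on $\ell$ letters, which is trivially of type $F_{\infty}$ (and the group is trivial when $k=\ell=0$). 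So assume $k \geq 1$ in what follows.

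Next, build the CAT($0$) cubical complexes. The complex $\widetilde{K}_{b}(\mathcal{P}, x^{k}a^{\ell})$ is contractible, and $QV_{k,\ell}$ acts on it properly and isometrically, by Proposition~\ref{proposition:orbit}. Letting $\pi$ erase the $a$-wires, define $K_{QF_{k,\ell}}$ and $K_{QT_{k,\ell}}$ to be the full subcomplexes spanned by those vertices admitting a representative $\Delta$ with $\pi(\Delta)$ planar, respectively annular. The proof of Proposition~\ref{proposition:CAT0cubical} applies verbatim: path-connectedness follows by induction on the number of transistors in a representative, and the link of a vertex in the subcomplex is a full subcomplex of its link in $\widetilde{K}_{b}(\mathcal{P}, x^{k}a^{\ell})$, so by Theorem~1(2) of \cite{CW} these subcomplexes are convex, hence CAT($0$), and $QF_{k,\ell}$ and $QT_{k,\ell}$ act on them properly and isometrically.

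Now apply Brown's criterion (Theorem~\ref{theorem:Brown}) as in Subsections~\ref{subsection:filtration}--\ref{subsection:conclusion}. Conditions (1) and (2) are immediate. Since applying a relation of $\mathcal{P}$ alters the counts of $x$'s and of $a$'s by $\pm 1$ apiece, the bottom label of any $(x^{k}a^{\ell}, \ast)$-diagram is, up to permutation of letters, $x^{j}a^{j-k+\ell}$ for some integer $j \geq \max(1, k-\ell)$; let $X_{n}$ be the subcomplex generated by the vertices whose bottom label has exactly $j \leq n$ letters $x$. As in Proposition~\ref{proposition:filtration}, $\bigcup_{n} X_{n} = X$ (each cube has only finitely many corners, so some $X_{n}$ contains them all); each $X_{n}$ is invariant and has finitely many vertex-orbits, since orbits are classified by the bottom label up to permutation (Proposition~\ref{proposition:orbit}) and vertex-links are finite; and cell stabilizers are finite. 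This gives (3)(a)--(c). For (3)(d), the decomposition of Subsection~\ref{subsubsection:connectivity} shows that, up to homotopy, $X_{n}$ is built from $X_{n-1}$ by coning off a disjoint union of copies of the descending link $\mathrm{lk}_{\downarrow}(x^{n}a^{n-k+\ell})$; the analogue of Proposition~\ref{proposition:standard} then reduces (3)(d) to the statement that, for each $N$, the link $\mathrm{lk}_{\downarrow}(x^{j}a^{j-k+\ell})$ is $N$-connected for all sufficiently large $j$. But this is exactly Proposition~\ref{proposition:linkQFQT} (for $QF_{k,\ell}$ and $QT_{k,\ell}$, whose descending links are defined precisely as in Definition~\ref{definition:QFQTlinks}) and Proposition~\ref{linkconnectednessV} (for $QV_{k,\ell}$), applied to $v = x^{j}a^{j-k+\ell}$: both exponents grow linearly and without bound as $j \to \infty$, differing always by the fixed constant $k-\ell$, so both eventually exceed the thresholds appearing in those propositions. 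Hence every $X_{m}$ is eventually as highly connected as desired, (3)(d) holds, and Brown's criterion delivers the conclusion.

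The only real difficulty is bookkeeping: one must track the dependence of the bottom-label exponents on $k$, $\ell$, and the transistor count, and confirm that the boundary cases --- $k = 0$, small $\ell$ (where the ``color'' parameter $j - k + \ell$ is small for small $j$ but still grows), and the least admissible values of $j$ --- break neither the exhaustiveness of the filtration nor the eventual high connectivity. No new geometric input is needed, since Propositions~\ref{proposition:linkQFQT} and~\ref{linkconnectednessV} are already proved for arbitrary words $x^{k}a^{\ell}$.
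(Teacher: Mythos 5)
Your proposal is correct and follows exactly the route the paper intends: the paper offers no written proof beyond the remark that ``a minor variant of the main argument'' applies, and your write-up is precisely that variant --- the identification $QV_{k,\ell}\cong D_{b}(\mathcal{P},x^{k}a^{\ell})$, the convex subcomplexes for $QF_{k,\ell}$ and $QT_{k,\ell}$, the filtration by the number of $x$'s in the bottom label, and the appeal to Propositions~\ref{proposition:linkQFQT} and~\ref{linkconnectednessV} (already stated for arbitrary $x^{k}a^{\ell}$) via Brown's criterion. Your explicit treatment of the degenerate case $k=0$ and of the invariant $|w|_{x}-|w|_{a}=k-\ell$ governing the bottom labels is correct bookkeeping that the paper leaves implicit.
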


Further extensions are possible. For instance, we could consider the case of rooted ordered $n$-ary trees. Let $\mathcal{T}^{n}_{k, \ell}$ denote the ordered forest of $k$ infinite $n$-ary trees and
$\ell$ isolated vertices. We can let $QV^{n}_{k, \ell}$ denote the group of bijections $h$ of $(\mathcal{T}^{n}_{k, \ell})^{0}$ that send the $j$th child of a vertex $v$ to the $j$th child of the vertex $h(v)$, with at most finitely many exceptions. This group is isomorphic to the braided diagram group
$D_{b}(\mathcal{P}_{n}, x^{k}a^{\ell})$, where $\mathcal{P}_{n} = \langle x, a \mid x = x^{n}a \rangle$.
This construction also yields its own ``$QF$'' and ``$QT$" versions, in a natural way. All such groups will have type $F_{\infty}$, provided that $n \geq 2$. The argument differs from the main argument of this paper in only minor ways (for instance, the bounds in the analogs of Propositions
\ref{proposition:linkQFQT} and \ref{linkconnectednessV} will be different).  

Even more generally, we could define a tree using a tree-like semigroup presentation $\mathcal{P}' = \langle \Sigma \mid \mathcal{R} \rangle$ (see Definition 4.1 from \cite{FH2}). In such a presentation, each relation has the form $x = x_{1}x_{2} \ldots x_{i}$, and a given $x \in \Sigma$ is the left-hand side of at most one relation in $\mathcal{R}$. For any fixed $x \in \Sigma$, there is a natural simplicial tree
$T_{(\mathcal{P}', x)}$ (as defined in the proof of Theorem 4.12 from \cite{FH2}). We can construct $T_{(\mathcal{P}',x)}$ inductively as follows. We begin with a root labelled $x$. This vertex is adjacent to children labelled (from left to right) $x_{1}$, $x_{2}$, $\ldots$, $x_{i}$ (respectively), if $(x = x_{1}x_{2}\ldots x_{i}) \in \mathcal{R}$. One similarly introduces and labels the children of $x_{1}$, $\ldots$, $x_{i}$ using the relations of the
form $(x_{\alpha} = w_{\alpha}) \in \mathcal{R}$ ($1 \leq \alpha \leq i$; $w_{\alpha}$ is a word in the generators $\Sigma$). The result is easily seen to be a rooted, ordered, labelled simplicial tree. The reader can easily verify that $T_{(\mathcal{P}, x)}$ is the usual infinite binary tree if $\mathcal{P} = \langle x \mid x = x^{2} \rangle$, for instance. Now, assuming that $a \notin \Sigma$, we define $\mathcal{P}'' = \langle \Sigma \cup \{ a \} \mid \mathcal{R}' \rangle$, where $\mathcal{R}'$ is the result of replacing each relation $(x = x_{1} \ldots x_{i}) \in \mathcal{R}$ with the relation $x = x_{1} \ldots x_{i} a$. (Here, the letter ``$a$'' is again being used to represent an isolated vertex.) The braided diagram group $D_{b}(\mathcal{P}'',x)$ should  be isomorphic to the group of quasi-automorphisms of $T_{(\mathcal{P}', x)}$ under appropriate hypotheses on the original tree-like semigroup presentation $\mathcal{P}'$. (For instance, one would need to ensure that the trees $T_{(\mathcal{P}',x)}$ and $T_{(\mathcal{P}',y)}$ are not isomorphic for
different $x$ and $y$; failure to do this would make the braided diagram group strictly smaller than the corresponding quasi-automorphism group.) It is reasonable to expect that such groups will often be of type $F_{\infty}$, but we will not try to guess at the appropriate hypotheses here.

It is worth adding a final cautionary note. We cannot expect all groups in the family sketched above to have type $F_{\infty}$.
The Houghton group $H_{n}$ can be described as the ``$QF$'' group associated to the forest
$\mathcal{T}^{1}_{n,0}$. This group was shown to have type $F_{n-1}$ but not type $F_{n}$ by Ken Brown \cite{Brown}. In fact, it seems that adding even a single rooted $1$-ary tree (i.e., cellulated ray) to a forest of higher-valence rooted trees will destroy the $F_{\infty}$ property.

\bibliographystyle{plain}
\bibliography{mainrev}

\end{document}